\theoremstyle{plain}
\newtheorem{theorem}{Theorem}		
\newtheorem*{corollary}{Corollary}
\newtheorem{proposition}[theorem]{Proposition}   
\theoremstyle{definition}
\newtheorem*{defin}{Definition}
\newtheorem{definition}[theorem]{Definition}    
\newtheorem{remark}[theorem]{Remark}		
\newtheorem*{example}{Example}
\numberwithin{equation}{section}
\numberwithin{theorem}{section}
\def\ttilde{\widetilde}
\begin{document}

\title[Clifford Algebras and Euclid's parameterization of Pythagorean triples]
{Clifford Algebras and Euclid's \\
     Parameterization of Pythagorean Triples}

\author{Jerzy Kocik}
\address{Department of Mathematics, Southern Illinois University, Carbondale, IL 62901}
\email{jkocik@math.siu.edu}

\begin{abstract} We show that the space of Euclid's parameters for 
Pythagorean triples is endowed with a natural symplectic structure and that 
it emerges as a spinor space of the Clifford algebra $\mathbb{R}_{21}$, 
whose minimal version may be conceptualized as a 4-dimensional real algebra 
of ``kwaternions.'' We observe that this makes Euclid's parameterization the 
earliest appearance of the concept of spinors. We present an analogue of the 
``magic correspondence'' for the spinor representation of Minkowski space 
and show how the Hall matrices fit into the scheme. The latter obtain an 
interesting and perhaps unexpected geometric meaning as certain symmetries 
of an Apollonian gasket. An extension to more variables is proposed and 
explicit formulae for generating all Pythagorean quadruples, hexads, 
and decuples are provided. 
\\
\\
{\bf Keywords:} Pythagorean triples, Euclid's parameterization, spinors, Clifford 
algebra, Minkowski space, pseudo-quaternions, modular group, Hall matrices, 
Apolloniam gasket, Lorentz group, Pythagorean quadruples and $n$-tuples. 
\end{abstract}


\maketitle
%
%
\section{Introduction }  \label{s:1}

In the common perception, Clifford algebras belong to modern mathematics. It 
may then be a point of surprise that the first appearance of spinors in 
mathematical literature is already more than two thousand years old! 
Euclid's parameters $m$ and $n$ of the Pythagorean triples, first described by 
Euclid in his \textit{Elements}, deserve the name of Pythagorean spinors --- as will be 
shown.

The existence of Pythagorean triples, that is triples of natural numbers 
$(a,b,c)$ satisfying
\begin{equation}    \label{eq:1.1}
  a^{2}+b^2=c^{2}
\end{equation}
has been known for thousands of years\footnote{ The cuneiform tablet known 
as Plimpton 322 from Mesopotamia enlists 15 Pythagorean triples and is 
dated for almost 2000 BCE. The second pyramid of Giza is based on the 3-4-5 
triangle quite perfectly and was build before 2500 BCE. It has also been 
argued that many megalithic constructions include Pythagorean triples \cite{Tho}.}. 
Euclid (ca 300 \textsc{bce}) provided a formula for finding 
Pythagorean triples from any two positive integers $m$ and $n$, $m>n$, namely:
\begin{equation}    \label{eq:1.2}
  \begin{aligned}
  a &= m^{2} - n^{2} \\
  b &= 2\textit{mn}  \\
  c &= m^{2}+n^{2}
\end{aligned}
\end{equation}
(Lemma 1 of Book X). It easy to check that $a$, $b$ and $c$ so defined automatically satisfy Eq. (1.1), 
i.e., they form an integer right triangle.

A Pythagorean triple is called \textit{primitive} if ($a$, $b$, $c)$ are mutually prime, that is 
$\hbox{gcd\,}(a,b,c)=1$. Every Pythagorean triple is a multiple of some primitive triples. 
The primitive triples are in one-to-one correspondence with relatively prime pairs 
$(m, n)$, $\hbox{gcd\,}(m,n)=1$, $m>n$, such that exactly one of ($m$, $n)$ is even \cite{Sie, T-T}.

An indication that the pair ($m, n)$ forms a \textit{spinor} description of Pythagorean triples 
comes from the well-known fact that \eqref{eq:1.2} may be viewed as the square of an integer 
complex number. 
After recalling this in the next section, we build a more profound analysis based 
on a 1:2 correspondence of the integer subgroups of $O(2,1)$ and $SL^{\pm\!}(2,\mathbb{R})$, 
from which the latter may be viewed as the corresponding pin group of the former. 
For that purpose we shall also introduce the concept of pseudo-quaternions 
(``kwaternions'') --- representing the minimal Clifford algebra for the pseudo-Euclidean 
space $\mathbb{R}^{2,1}$ . 

The most surprising result concerns an Apollonian gasket where all the 
objects mentioned acquire a geometric interpretation.

\section{Euclid's Labels as Complex Numbers }  \label{s:2}

Squaring an integer complex number $z=m$+\textit{ni} will result in an integer complex number
\begin{equation}    \label{eq:2.1}
    z^{2}=(m+n\,i)^2=(m^2-n^2)+2mn\;i,
\end{equation}
the norm of which turns out to be also an integer:
\begin{equation}    \label{eq:2.2}
  \vert z^2\,\vert \;\;=\;\;m^2+n^2\,.
\end{equation}
This auspicious property gives a method of producing Pythagorean triangles: 
just square any integer complex number and draw the result. 
For instance $z = 2 + i$ will produce 3-4-5 triangle, the so-called Egyptian triangle 
(Figure~\ref{fig:2}b). 
\begin{figure}  
\[
  \includegraphics[width=5in]{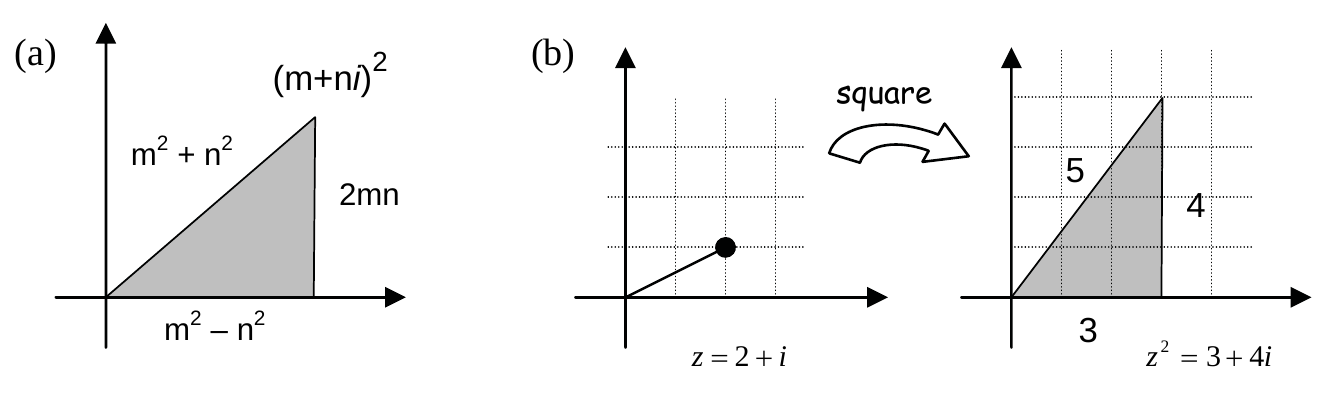}
\]
\caption{(a) Euclid's parameterization.  (b) The Egyptian triangle is the square of $z = 2+i$.}
\label{fig:1}
\end{figure}
Equation \ref{eq:2.1} is equivalent to Euclid's formula for the parameterization of 
Pythagorean triples \cite{T-T}. We shall however allow $z$ to be any integer complex number, 
and therefore admit triangles with negative legs (but not hypotenuses). The squaring map
\[
  \hbox{sq: } \mathbb{C} \to  \mathbb{C}: z  \to  z^{2}
\]
(exclude zero) has a certain redundancy --- both $z$ and -- $z$ give the same 
Pythagorean triple. This double degeneracy has the obvious explanation: 
squaring a complex number has a geometric interpretation of doubling the 
angle. Therefore one turn of the parameter vector $z=m+ni$ around the origin 
makes the Pythagorean vector $z^{2}$ go twice around the origin. 
We know the analogue of such a situation from quantum physics. Fermions need 
to be turned in the visual space twice before they return to the original 
quantum state (a single rotation changes the phase of the ``wave function'' 
by 180$^\circ$). Mathematically this corresponds to the group homomorphism of a double cover
\[
  SU(2) \xrightarrow{2:1} SO(3)
\]
which is effectively exploited in theoretical physics \cite{BL}. 
The rotation group $SO(3)$ is usually represented as the group of special 
orthogonal 3$\times 3$ real matrices, and the unitary group $SU(2)$ is typically 
realized as the group of special unitary 2$\times $2 complex matrices. 
The first acts on $\mathbb{R}^{3}$ --- identified with the visual physical space, 
and the latter acts on $\mathbb{C}^{2}$ --- interpreted as the spin representation 
of states. 
One needs to rotate a visual vector twice to achieve a single rotation in the spinor space $\mathbb{C}^{2}$. 
This double degeneracy of rotations can be observed on the quantum level \cite{AS}, \cite{RZB}.
\begin{figure}[h]  
\[
\includegraphics[height=1.2in]{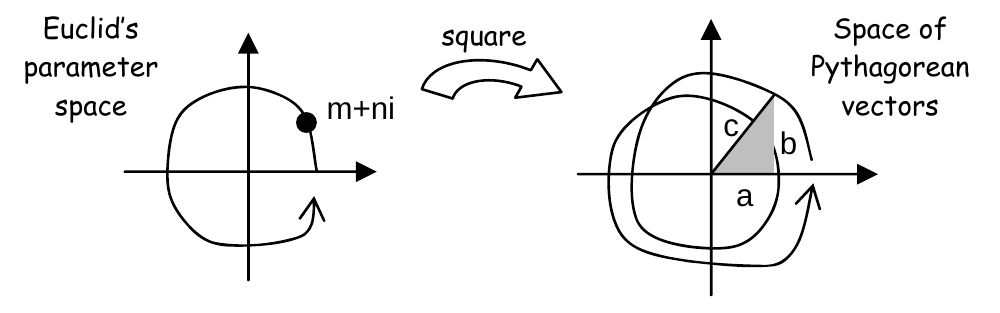}
\]
\caption{Double degeneracy of the Euclid's parameterization.}
\label{fig:2}
\end{figure}
\begin{remark}   
The group of rotations $SO(3)$ has the topology of a 3-dimensional projective space, and as such has first fundamental group isomorphic to $\mathbb{Z}_{2}$. 
Thus it admits loops that cannot be contracted to a point, but a double of a loop (going twice around a topological ``hole'') becomes contractible. 
Based on this fact, P.A.M. Dirac designed a way to visualize the property as the so-called ``belt trick'' \cite{Jur, Kau, New}. 
\end{remark}

This analogy between the relation of the Pythagorean triples to the Euclid's 
parameters and the rotations to spinors has a deeper level --- explored in 
the following sections, in which we shall show that \eqref{eq:2.1} is a shadow of the double covering 
homomorphism $SL^{\pm\!}(2) \to O(2,1)$. 
It shall become clear that calling the Euclid parameterization $(m,n)$ a \textit{Pythagorean spinor} is legitimate. 

Before we go on, note another interesting redundancy of the Euclidean 
scheme. For a complex number $z=m + ni$ define 
\[
  z^{d} = (m+n) + (m-n)i\,.
\]
The square of this number brings
\begin{equation}    \label{eq:2.3}
  (z^{d})^{2}=4mn+2(m^2-n^2)\;i
\end{equation}
with the norm $\vert z^{d}\vert ^{2} = 2(m^{2} + n^{2})$. 
Thus both numbers $z$ and $z^{d}$ give -- up to scale -- the same Pythagorean 
triangle but with the legs interchanged. 
For instance both $z = 2 + i$ and $z^{d} = 3 + i$ give Pythagorean triangles 3-4-5 and 8-6-10, respectively (both similar to the Egyptian triangle). 
In Euclid's recipe \eqref{eq:1.2}, only one of the two triangles is listed. 
This map has ``near-duality'' property: 
\begin{equation}    \label{eq:2.4}
  (z^{d})^{d} = 2z\,.
\end{equation}
%

\section{Pseudo-quaternions and Matrices}  \label{s:3}

\vskip.1in
\noindent
\textbf{A. Pseudo-quaternions.} 
Let us introduce here an algebra very 
similar to the algebra of quaternions.

\begin{defin} Pseudo-quaternions $\mathbb{K}$ (kwaternions) are 
numbers of type 
\begin{equation}    \label{eq:3.1}
  q=a+b \mathbf{i} + c \mathbf{j} + d\mathbf{k}
\end{equation}
where $a, b, c, d \in \mathbb{R}$ and where $\mathbf{i}$, $\mathbf{j}$, 
$\mathbf{k}$ are independent ``imaginary units''. 
Addition in $\mathbb{K}$ is defined the usual way. 
Multiplication is determined by the following rules for the ``imaginary units'':
\begin{equation}    \label{eq:3.2}
\begin{alignedat}{2}
  \mathbf{i}^{2} &= 1   &\qquad    \mathbf{ ij} &= \mathbf{k}  \\
  \mathbf{j}^{2} &= 1   &\qquad    \mathbf{ jk} &= -\mathbf{i}  \\
  \mathbf{k}^{2} &= -1 &\qquad    \mathbf{ ki} &= -\mathbf{j}
\end{alignedat}
\end{equation}
plus the anticommutation rules for any pair of distinct imaginary units: 
$\mathbf{ij} = -\mathbf{ji}$, $\mathbf{jk }= -\mathbf{kj}$, and $\mathbf{ki} = 
-\mathbf{ik}$.  
[The rules are easy to remember: the minus sign appears only 
when $\mathbf{k}$ is involved in the product].
\end{defin}

The pseudo-quaternions form an associative, non-commutative, real 4-dimensional algebra with a unit. 
Define the \textit{conjugation} of $q$ as 
\begin{equation}    \label{eq:3.3}
  \overline q = a - b\mathbf{i} - c\mathbf{j} - d\mathbf{k}
\end{equation}
and the squared norm
\begin{equation}    \label{eq:3.4}
  \vert q\vert ^{2} = \overline qq = a^{2} -  b^{2} - c^{2}+d^{2}
\end{equation}
The norm is not positive definite, yet kwaternions ``almost'' form a 
division algebra. 
Namely the inverse
\begin{equation}    \label{eq:3.5}
  q^{-1} = \overline q / \vert q\vert ^{2}
\end{equation}
is well-defined except for a subset of measure zero --- the ``cone'' 
$a^{2} -  b^{2} - c^{2}+d^{2} = 0$.

Other properties like $\vert ab\vert ^{2}=\vert ba\vert ^{2}$, 
$\vert ab\vert ^{2}=\vert a\vert ^{2}\vert b\vert ^{2}$, 
etc., are easy to prove. 
The set $G_{o} = \{q\in \mathbb{K} \mid \vert q\vert ^{2} = 1\}$ forms a group, and $G_{o}  \cong  SL(2,\mathbb{R}$). 
Topologically, the group (subset of kwaternions) is a 
product of the circle and a plane, $G\cong  S^{1}\times 
\mathbb{R}^{2}$ . 
The bigger set $G = \{q \in \mathbb{K} \mid \vert q \vert ^{2}=\pm 1\}$ is isomorphic to $S^{\pm }L(2,R)$, 
the modular group of matrices with determinant equal $\pm $1.

The regular quaternions describe rotations of $\mathbb{R}^{3}$. 
Pseudo-quaternions describe Lorentz transformations of Minkowski space 
$\mathbb{R}^{2,1}$, $O(2,1)$. 
Indeed, note that the map
\begin{equation}    \label{eq:3.6}
    v \to v' = qvq^{-1}  
\end{equation}
preserves the norm, $\vert qvq^{-1}\vert ^{2}=\vert v\vert^{2}$. 
Thus the idea is quite similar to regular quaternions. Represent 
the vectors of space-time $\mathbb{R}^{2,1}$ by the imaginary part
\begin{equation}    \label{eq:3.7}
  \mathbf{v} = x\mathbf{i} + y\mathbf{j} + t\mathbf{k}
\end{equation}

Its norm squared is that of Minkowski space $\mathbb{R}^{2,1}$, $\vert 
\mathbf{v} \vert ^{2} = - x^{2} - y^{2} + z^{2}$. 
Define a \textit{transformation kwaternion} as an element $q \in  G$. 
Special cases are:
\begin{equation}    \label{eq:3.8}
 \begin{alignedat}{3}
	&\hbox{rotation in $xy$-plane:}  &\quad 
		q &= \cos \phi/2 + \mathbf{k} \sin \phi/2 \\
	&\hbox{hyperbolic rotations } &\quad
		q &= \cosh \phi/2 + \mathbf{i} \sinh \phi/2 &\quad
			&\hbox{(boost in direction of $y$-axis)} \\[-3pt]
	&\hbox to .8in{}(boosts): \\[-12pt]
	&&\quad
		q &= \cosh \phi/2 + \mathbf{j} \sinh \phi/2 &\quad
			&\hbox{(boost in direction of $x$-axis)}. 
 \end{alignedat}
\end{equation}

It is easy to see that the norm of $\mathbf{v}$ is preserved transformation 
\eqref{eq:3.7}, as well as its lack of real part. 

\vskip.1in
\noindent
\textbf{B. Pseudo-quaternions as a Clifford algebra. }
The algebra of pseudo-quaternions is an example of a Clifford algebra. 
Namely, it is the Clifford algebra of a pseudo-Euclidean space $\mathbb{R}^{2,1}$, a 3-dimensional 
Minkowski space with quadratic form
\begin{equation}    \label{eq:3.9}
  g = - x^{2} - y^{2} + z^{2}
\end{equation}
The corresponding scalar product of two vectors will be denoted as $g(v, w) = 
\langle v,w \rangle $. 
Let $\{\mathbf{f}_{1}$, $\mathbf{f}_{2}$, $\mathbf{f}_{3}\}$ be an orthonormal basis of $\mathbb{R}^{2,1}$ such that: 
\begin{equation}    \label{eq:3.10}
\begin{aligned}
  \langle  \mathbf{f}_{1}, \mathbf{f}_{1 }\rangle  &= -1  \\
  \langle  \mathbf{f}_{2}, \mathbf{f}_{2 }\rangle &= -1  \\
  \langle  \mathbf{f}_{3}, \mathbf{f}_{3 }\rangle  &= 1 \\
  \hbox{and } \langle  \mathbf{f}_{i }, \mathbf{f}_{j }\rangle  &= 0 
	\hbox{ for any } i \ne  j\,.
\end{aligned}
\end{equation}

Let us build the Clifford algebra \cite{Por} of this space. We shall assume the standard 
sign convention:
\[
  \mathbf{vw} + \mathbf{wv} = - 2 \langle \mathbf{v, w}_{ }\rangle  
\]
(Clifford products on the left side, pseudo-Euclidean scalar product on the 
right). The basis of the universal Clifford algebra $\mathbb{R}_{2,1}$ of 
the space $\mathbb{R}^{2,1}$ consists of eight elements 
\[
\begin{aligned}
  \mathbb{R}_{2,1} = \hbox{span}\{\, 1, \ \mathbf{f}_{1}, \ \mathbf{f}_{2}, \ 
  \mathbf{f}_{3}, \ \mathbf{f}_{1} \mathbf{f}_{2}, \ \mathbf{f}_{2} 
  \mathbf{f}_{3}, \ \mathbf{f}_{3} \mathbf{f}_{1}, \ \mathbf{f}_{1} 
  \mathbf{f}_{2} \mathbf{f}_{3} \,\}
\end{aligned}
\]
The basic elements satisfy these relations (in the sense of the \textit{algebra} products (cf.\ \eqref{eq:3.2}) :
\begin{equation}    \label{eq:3.11}
\begin{aligned}
  \mathbf{f}_{1}^{2} &= 1  \\
  \mathbf{f}_{2}^{2} &= 1  \\
  \mathbf{f}_{3}^{2} &= -1 \\
  \hbox{and } \mathbf{f}_{i} \mathbf{f}_{j} &= - \mathbf{f}_{j} \mathbf{f}_{i} 
	\hbox{ for any } i\ne j\,. 
\end{aligned}
\end{equation}
Relations between the other elements of the basis of the Clifford algebra 
are induced from these via the associativity of the algebra product.

\vskip.1in
\noindent
\textbf{C. Matrix representation of pseudo-quaternions. }
Note that the following four matrices satisfy the above relations \eqref{eq:3.11}:
\begin{equation}    \label{eq:3.12}
  \sigma _{0}=\left[ {{\begin{array}{*{20}c}
		1   & 0   \\
		0   & 1   \\
	\end{array} }} \right] \quad 
   \sigma _{1}=\left[ {{\begin{array}{*{20}c}
		0   & 1   \\
		1   & 0   \\
	\end{array} }} \right] \quad 
   \sigma _{2}=\left[ {{\begin{array}{*{20}c}
		{-1}   & 0   \\
		{\;0}   & 1   \\
	\end{array} }} \right] \quad 
   \sigma _{3}=\left[ {{\begin{array}{*{20}c}
		{\;0}   & 1   \\
		{-1}   & 0   \\
	\end{array} }} \right]\,.
\end{equation}
Indeed, the squares are, respectively:
\begin{equation}   
  \sigma _{0}^2= \sigma _{0} \qquad \sigma _{1}^{2}
	=\sigma_{0} \qquad \sigma _{2}^{2}=\sigma _{0} \qquad \sigma _{3}^{2} 
	= - \sigma _{0}\,.
\end{equation}
The products are 
\begin{equation}    \label{eq:3.14}
  \sigma _{1} \sigma _{2} = + \sigma _{3} \qquad \sigma _{2} 
	\sigma_{3} = - \sigma _{1} \qquad \sigma _{3} \sigma _{1} 
	= -\sigma_{2}
\end{equation}
and $\sigma _{0}\sigma _{i}=\sigma _{i}$ for each $i$. Thus these 
matrices play a similar role as the Pauli matrices in the case of spin 
description or the regular quaternions. Note however the difference: the 
above construct is over real numbers while the Pauli matrices are over 
complex numbers. 

The other elements are represented by matrices as follows:
\[
  \mathbf{1 = }\sigma _{0} \qquad \mathbf{ f}_{1} \mathbf{f}_{2}
	=\sigma_{3} \qquad \mathbf{ f}_{2} \mathbf{f}_{3} 
	= -\sigma _{1} \qquad \mathbf{ f}_{3} \mathbf{f}_{1} 
	= -\sigma _{2} \qquad \mathbf{f}_{1} \mathbf{f}_{2} \mathbf{f}_{3} 
	= -\sigma _{0}\,.
\]
As in the case of quaternions, the \textit{universal} Clifford algebra for $\mathbb{R}^{2,1}$ 
is 8-dimensional, yet the \textit{minimal} Clifford algebra is of dimension 4 and is isomorphic to the algebra of kwaternions $\mathbb{K}$.

\begin{remark}
In algebra, pseudo-quaternions are also known as split quaternions,
especially in the context of  the Cayley–Dickson construction \cite{Di}.  
Other names include para-quaternions, coquaternions  and antiquaternions. 
\end{remark}

\newpage
\section{Minkowski Space of Triangles and Pythagorean Spinors}  \label{s:4}

Now we shall explore the geometry of Euclid's map for Pythagorean triples. 
As a map from a 2-dimensional symplectic space to a 3-dimensional Minkowski 
space, we have: 
\begin{equation}    \label{eq:4.1}
\begin{aligned}
  \varphi : (\mathbb{R}^2 , \omega )  &\to  (\mathbb{R}^{2,1} , G) \\
  (m,n)  &\to  (m^{2 }- n^{2}, 2\textit{mn, m}^{2}+n^{2})
\end{aligned}
\end{equation}
The map \eqref{eq:2.1} of squaring complex numbers is just its truncated version. 
We shall discuss symmetries of the natural structures of both spaces. 
Obviously, we are mostly interested in the discrete subsets $\mathbb{Z}^{2}$ 
and $\mathbb{Z}^{3}$ of those spaces. 

First, introduce the \textit{space of triangles} $(x,y,z)$ as a real 3-dimensional 
Minkowski space $\mathbb{R}^{2,1}$ with a quadratic form 
\begin{equation}    \label{eq:4.2}
  Q = -x^{2} - y^{2} + z^{2}
\end{equation}
(``space-time'' with the hypotenuse as the ``time''), and with metric given 
by a $3\times 3$ matrix $G = diag (1, 1,-1$). The right triangles are 
represented by ``light-like'' (null) vectors, and the Pythagorean triples by 
the integer null vectors in the light cone. Clearly, not all vectors 
correspond to real triangles, and the legs may assume both positive and 
negative values. We have immediately

\begin{proposition}   
The group of integer orthogonal matrices 
$O(2,1;\mathbb{Z}) \subset  O(2,1;\mathbb{R})$ (Lorentz transformations) 
permutes the set of Pythagorean triangles. 
\end{proposition}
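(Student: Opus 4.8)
The plan is to verify directly that an integer Lorentz matrix sends the set of Pythagorean triples to itself \emph{bijectively}, which is exactly what ``permutes'' means. Recall from the setup that a Pythagorean triangle is encoded as an integer null vector, i.e.\ a point $v=(x,y,z)\in\mathbb{Z}^{3}$ with $Q(v)=-x^{2}-y^{2}+z^{2}=0$, equivalently $x^{2}+y^{2}=z^{2}$. Write $\mathcal{P}\subset\mathbb{Z}^{3}$ for this set. The strategy I would follow is the standard ``two inclusions'' argument: show that $M(\mathcal{P})\subseteq\mathcal{P}$ and $M^{-1}(\mathcal{P})\subseteq\mathcal{P}$ for every $M\in O(2,1;\mathbb{Z})$, and then conclude that $M$ restricts to a bijection of $\mathcal{P}$.

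First I would note that membership in $\mathcal{P}$ is the conjunction of two conditions, each visibly preserved by $M$. Since $M$ has integer entries, $Mv\in\mathbb{Z}^{3}$ whenever $v\in\mathbb{Z}^{3}$ (integrality); and since $M\in O(2,1)$ preserves the quadratic form, $Q(Mv)=Q(v)$, so $Q(Mv)=0$ whenever $Q(v)=0$ (nullity). Together these give $M(\mathcal{P})\subseteq\mathcal{P}$. To obtain the reverse inclusion it suffices to know that $M^{-1}$ is again an integer Lorentz matrix, and here the explicit metric does the work: from the defining relation $M^{T}GM=G$ together with $G^{2}=\mathbf{1}$ (as $G=\mathrm{diag}(1,1,-1)$) one reads off $M^{-1}=GM^{T}G$, whose entries are integers because those of $M^{T}$ are. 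Hence $M^{-1}\in O(2,1;\mathbb{Z})$ and, by the same two observations, $M^{-1}(\mathcal{P})\subseteq\mathcal{P}$. The inclusions $M(\mathcal{P})\subseteq\mathcal{P}$ and $M^{-1}(\mathcal{P})\subseteq\mathcal{P}$ force equality $M(\mathcal{P})=\mathcal{P}$, so $M$ is a permutation of $\mathcal{P}$.

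There is no real analytic obstacle here; the only point genuinely requiring care—and the one I would flag explicitly—is the sign convention on the hypotenuse. A general $M\in O(2,1;\mathbb{Z})$ may reverse the time direction and so send a triple with $z>0$ to one with $z<0$; if one insists, as in Euclid's recipe~\eqref{eq:1.2}, on positive hypotenuses, the statement should be read for the orthochronous integer subgroup $O^{+}(2,1;\mathbb{Z})$, whose elements preserve each sheet of the light cone. For the full group acting on the entire integer null cone the argument above is complete, and the subtlety reduces to restricting to the subgroup that fixes the sign of $z$.
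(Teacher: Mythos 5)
Your proof is correct, and it is essentially the argument the paper has in mind: the paper offers no proof at all (the proposition follows the words ``We have immediately''), the point being exactly that an integer matrix $M$ with $M^{T}GM=G$ preserves both integrality and the vanishing of $Q=-x^{2}-y^{2}+z^{2}$. Your two refinements --- checking that $M^{-1}=GM^{T}G$ is again in $O(2,1;\mathbb{Z})$ so that ``permutes'' genuinely means a bijection, and flagging that only the orthochronous subgroup preserves positivity of the hypotenuse (the paper admits negative legs but not negative hypotenuses) --- make explicit what the paper leaves implicit, and both are sound.
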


On the other hand we have the two-dimensional space of Euclid's parameters 
$\mathbf{E} \cong  \mathbb{R}^{2}$. Its elements will be called 
\textit{Pythagorean spinors.} Occasionally we shall use the isomorphism $\mathbb{R}^{2} \cong \mathbb{C}$ 
and identify $[m,n]^{T}=m+ni$. 

The space of Euclid's parameters will be equipped with an inner product: 

\begin{definition}  
For two vectors $u = [m,n]^{T}$ and $w = [m',n']^{T}$, the value of the symplectic form $\omega $ is defined as
\begin{equation}    \label{eq:4.3}
  \omega (u, w) = mn' - nm'.
\end{equation}
Conjugation $A^\ast$ of a matrix $A$ representing an endomorphism in $\mathbf{E}$ is the adjugate matrix, namely 
\begin{equation}    \label{eq:4.4}
    \hbox{if } A=\left[ {{\begin{array}{*{20}c}
	a   & b   \\
	c   & d   \\
   \end{array} }} \right] 
  \hbox{ then } A^\ast =\left[ {{\begin{array}{*{20}c}
	d & {-b} \\
	{-c} & a \\
  \end{array} }} \right]
\end{equation}
Conjugation of vectors in $\mathbf{E}$ is a map into the dual space, expressed 
in terms of matrices as
\begin{equation}    \label{eq:4.5}
  \left[ {{\begin{array}{*{20}c}
 m   \\
 n   \\
\end{array} }} \right]^\ast \quad =\quad \left[ {{\begin{array}{*{20}c}
 {-n}   & m   \\
\end{array} }} \right]\,.
\end{equation}
Now, the symplectic product may be performed via matrix multiplication: 
$\omega (u,w)=u^{\ast }w$. The map defined by \eqref{eq:4.5} is the symplectic 
conjugation of the spinor. Also, note that $AA^* = A^*A = \det (A) I$. 
\end{definition}

\begin{remark}   
In the complex representation, the symplectic product is $\omega (u,w)=\tfrac{i}{2}(\bar {u}w-\bar {w}u)$.
\end{remark}

\begin{proposition}   
For any two matrices $A$ and $B$ and vector $u$ in the spinor space we have:
\begin{equation}    \label{eq:4.6}
\begin{alignedat}{2}
	(i)& &\qquad  (AB)^* &= B^*A^*	\\
	(ii)& &\qquad (A)^{**} &= A	\\
	(iii)& &\qquad (Au)^* &= u^*A^*\,.
\end{alignedat}
\end{equation}
\end{proposition}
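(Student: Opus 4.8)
The plan is to reduce all three identities to a single structural observation: conjugation is nothing but transposition conjugated by the matrix of the symplectic form. Let $J = \left[\begin{smallmatrix} 0 & 1 \\ -1 & 0\end{smallmatrix}\right]$ be the matrix representing $\omega$, so that $\omega(u,w) = u^T J w$ and hence $u^* = u^T J$ for a column vector $u$. I would first verify, by a one-line $2\times 2$ computation, the companion formula for matrices,
\[
  A^* = J^{-1} A^T J = -\,J A^T J,
\]
together with the elementary facts $J^2 = -I$ and $J^{-1} = J^T = -J$. These are the only genuine calculations in the whole argument; everything else is formal.

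With this dictionary in hand, each claim collapses to a short manipulation. For (i) I would write $(AB)^* = J^{-1}(AB)^T J = J^{-1} B^T A^T J$ and insert $I = J J^{-1}$ between $B^T$ and $A^T$ to factor it as $(J^{-1}B^T J)(J^{-1}A^T J) = B^*A^*$; the antihomomorphism property is thus inherited directly from that of the transpose. For (iii) the same insertion trick gives $(Au)^* = (Au)^T J = u^T A^T J = (u^T J)(J^{-1}A^T J) = u^* A^*$, which is really the vector shadow of (i). For (ii) I would apply the matrix formula twice and simplify using $J^2 = -I$ and $(J^{-1})^T = J$, so that $(A^*)^* = J^{-1}(A^*)^T J$ unwinds to $A$ once the signs cancel.

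There is no real obstacle here, the content being entirely bookkeeping, so the only thing demanding care is the interplay of conventions: $u^*$ lands in the dual space and is a row vector, hence in (iii) one must keep track of which objects are $2\times 1$, $1\times 2$, or $2\times 2$ and confirm that the products are conformable. As an alternative, fully self-contained route, each of (i)--(iii) can simply be checked by expanding both sides for a generic $A = \left[\begin{smallmatrix} a & b \\ c & d\end{smallmatrix}\right]$, a generic $B$, and $u = [m,n]^T$; this avoids introducing $J$ at the cost of three short entry-by-entry verifications. A third option for (i) alone is to exploit the already-noted identity $AA^* = A^*A = \det(A)\,I$: since $(AB)(B^*A^*) = A(BB^*)A^* = \det(AB)\,I$, the factor $B^*A^*$ must agree with $(AB)^*$ wherever $\det(AB)\neq 0$, and the identity then extends to all matrices by polynomial continuity. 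I would prefer the $J$-conjugation argument, since it dispatches all three parts uniformly and makes transparent that conjugation is precisely the symplectic adjoint, tying the statement back to the symplectic structure of the spinor space $\mathbf{E}$.
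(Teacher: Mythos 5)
Your proof is correct in all details, and each of the three routes you sketch is viable. Note, however, that the paper offers no proof of this proposition at all: it is stated as an immediate consequence of the definitions \eqref{eq:4.4}--\eqref{eq:4.5} of the adjugate and of vector conjugation, the implicit justification being exactly the generic entry-by-entry verification you list as your fallback option. Your preferred argument is therefore a genuinely different, more structural route. The dictionary $u^* = u^T J$ and $A^* = J^{-1}A^T J = -JA^TJ$, with $J = \left[\begin{smallmatrix} 0 & 1 \\ -1 & 0 \end{smallmatrix}\right]$ the Gram matrix of $\omega$, reduces (i)--(iii) to formal properties of transposition together with $J^2 = -I$, and it exhibits conjugation as the symplectic adjoint, i.e.\ $\omega(Au,w) = \omega(u, A^*w)$. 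That perspective buys something the bare computation does not: it explains \emph{why} the identities hold (they are inherited from the transpose), it treats matrices and vectors uniformly (your (iii) really is the vector shadow of (i)), and it is precisely the structure invoked immediately afterwards in the paper's proof that $\omega(Au,Aw) = \det(A)\,\omega(u,w)$ for $A$ in the modular group, which uses (iii) and $A^*A = \det(A) I$. The only overhead is the one-line verification of the dictionary formulas, which you correctly flag as the sole genuine calculation. Your determinant/continuity argument for (i) is also sound --- both sides are polynomial in the entries and agree on the dense set where $\det(AB) \neq 0$ --- though it is heavier machinery than this elementary statement warrants.
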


\begin{proposition}   	\label{prop:4.5} 
The group that preserves the symplectic structure (up to a sign) is 
the modular group $SL^{\pm\!}(2,\mathbb{Z}) \subset  SL^{\pm\!}(2,\mathbb{R})$ understood here as 
the group of $2\times 2$ integer (respectively, real) matrices with determinant equal $\pm 1$. 
\end{proposition}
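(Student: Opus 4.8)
The plan is to derive, for an arbitrary endomorphism $A$ of $\mathbf{E}$, a single identity expressing $\omega(Au,Aw)$ in terms of $\omega(u,w)$ and $\det(A)$, after which the characterization of the stabilizing group drops out at once. First I would pin down the meaning of ``preserves up to a sign'': the matrix $A$ has this property if $\omega(Au,Aw) = \epsilon\,\omega(u,w)$ for all spinors $u,w$, with a fixed sign $\epsilon \in \{+1,-1\}$ depending only on $A$.

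The computation itself uses the matrix form of the symplectic product, $\omega(u,w) = u^*w$, together with property (iii) of the preceding proposition, $(Au)^* = u^*A^*$, and the relation $A^*A = \det(A)\,I$ recorded above. Chaining these,
\[
\omega(Au,Aw) \;=\; (Au)^*(Aw) \;=\; u^*\,(A^*A)\,w \;=\; u^*\big(\det(A)\,I\big)w \;=\; \det(A)\,\omega(u,w).
\]
Thus the proportionality factor is exactly $\det(A)$; in particular it is automatically the same for every pair $(u,w)$, so that ``up to a sign'' necessarily means a single global sign rather than one varying from pair to pair.

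Both inclusions are then immediate. If $A$ preserves $\omega$ up to a sign, the displayed identity forces $\det(A) = \pm 1$, hence $A \in SL^{\pm\!}(2,\mathbb{R})$; conversely, if $\det(A) = \pm 1$ the identity gives $\omega(Au,Aw) = \pm\,\omega(u,w)$, so $A$ stabilizes the form up to sign. Because matrices of determinant $\pm 1$ are closed under multiplication and inversion, this stabilizer is precisely the group $SL^{\pm\!}(2,\mathbb{R})$. For the integer statement I would run the same computation verbatim over $\mathbb{Z}$: an integer matrix preserves the integer symplectic form up to a sign exactly when $\det(A) = \pm 1$, and since $\pm 1$ are the only units of $\mathbb{Z}$ this coincides with $A$ being invertible over $\mathbb{Z}$ (an automorphism of the lattice $\mathbb{Z}^2$), i.e. $A \in SL^{\pm\!}(2,\mathbb{Z})$.

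I do not anticipate a genuine obstacle: the entire content is the one-line determinant identity above, and the only point meriting a moment's care is the observation --- guaranteed by that identity --- that the admissible sign is global rather than pair-dependent, which is what justifies reading ``up to a sign'' as the determinant condition $\det(A)=\pm 1$.
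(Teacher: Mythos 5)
Your proof is correct and follows essentially the same route as the paper: both reduce the claim to the identity $\omega(Au,Aw) = (Au)^*(Aw) = u^*(A^*A)w = \det(A)\,\omega(u,w)$ via the adjugate property $A^*A = \det(A)I$, so that preservation up to a sign is exactly the condition $\det(A) = \pm 1$. Your added remarks --- that the sign is automatically global and that over $\mathbb{Z}$ the condition $\det(A)=\pm1$ coincides with invertibility of the lattice map --- are small refinements the paper leaves implicit, not a different argument.
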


\begin{proof} 
Preservation of the symplectic form is equivalent to matrix property 
$A^*A = \pm I$. Indeed: 
\[
\omega (Au,Aw) = (Au)^*(Aw) = u(A^*A)w = \det(A) u^*w = \pm \omega (u,w)
\]
for any $u ,w$,  since $A^{\ast }A = \pm \det(A)I$, and by assumption det$(A)=\pm 1$. 
Let us relate the two spaces. Given any endomorphism M of the space of 
triangles $\mathbb{R}^{2,1}$, we shall call an endomorphism $\ttilde {M}$ its 
spinor representation, if 
\begin{equation}    \label{eq:4.7}
  M(\varphi (u)) = \varphi  (\ttilde {M}u )\,.
\end{equation}
Now we shall try to understand the geometry of the spin (Euclid's) 
representation of the Pythagorean triples in terms of the kwaternions 
defined in the previous sections. Recall the matrices representing the 
algebra:
\[
\sigma _0 =\left[ {{\begin{array}{*{20}c}
 1   & 0   \\
 0   & 1   \\
\end{array} }} \right]
\quad
\sigma _1 =\left[ {{\begin{array}{*{20}c}
 {\;0}   & 1   \\
 1   & 0   \\
\end{array} }} \right]
\quad
\sigma _2 =\left[ {{\begin{array}{*{20}c}
 {-1}   & 0   \\
 {\;0}   & 1   \\
\end{array} }} \right]
\quad
\sigma _3 =\left[ {{\begin{array}{*{20}c}
 0   & 1   \\
 {-1}   & 0   \\
\end{array} }} \right]\,.
\]
By analogy to the spinor description of Minkowski space explored in 
theoretical physics, we shall build a ``magic correspondence'' for 
Pythagorean triples and their spinor description. First, we shall map the 
vectors of the space of triangles, $\mathbb{R}^{2,1}$ into the traceless 
$2\times 2$ real matrices, $M^{ 0}_{22 }$\,. The map will be denoted by 
tilde $\sim : \mathbb{R}^{2,1}  \to   M^{ 0}_{22 }$ and defined:
\begin{equation}    \label{eq:4.8}
\begin{aligned}
  \mathbf{v} = (x,y,z)  \longrightarrow
\ttilde {\mathbf v} &=x\cdot \frac{1}{2}\left[ 
{{\begin{array}{*{20}c}
 0 & 1 \\
 1 & 0 \\
\end{array} }} \right] + y \cdot \frac{1}{2}\left[ 
{{\begin{array}{*{20}c}
 {-1} & 0 \\
 0 & 1 \\
\end{array} }} \right] + z\cdot \frac{1}{2}\left[ 
{{\begin{array}{*{20}c}
 0 & 1 \\
 {-1} & 0 \\
\end{array} }} \right]  	\\[6pt]
&=\frac{1}{2}\left[ {{\begin{array}{*{20}c}
 {-y}   & {x+z}   \\
 {x-z}   & y   \\
\end{array} }} \right]\,.
\end{aligned}
\end{equation}
\end{proof}

\begin{proposition}   
The matrix representation of the Minkowski space 
of triangles $\mathbb{R}^{2,1}$ realizes the original Minkowski norm via the 
determinant:
\[
\vert \vert \mathbf{v}\vert \vert  =  4 \det \ttilde{\mathbf v} = -x^{2} - y^{2} + z^{2}\,.
\]
The scalar product may be realized by traces, namely: 
\[
\mathbf{v}\cdot \mathbf{w} = -2\,\hbox{\textnormal{Tr}}\ \ttilde {\mathbf{v}}\,\ttilde 
{\mathbf{w}}
\]
and particular vector coefficients may be read from the matrix by:
\[
v^{i} = -\det \sigma_i \cdot \hbox{\textnormal{Tr}}\ \ttilde {\mathbf{v}}\sigma _{i}
\]
\end{proposition}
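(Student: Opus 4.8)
The plan is to reduce all three assertions to a single orthogonality relation for the representing matrices. Writing $\mathbf v=(v^1,v^2,v^3)=(x,y,z)$ and $\mathbf w=(w^1,w^2,w^3)$, the map \eqref{eq:4.8} is just $\ttilde{\mathbf v}=\tfrac12\sum_{i=1}^3 v^i\sigma_i$, so everything becomes bilinear bookkeeping in the $\sigma_i$. The relation I want is
\[
\mathrm{Tr}(\sigma_i\sigma_j)=2\,\eta_{ij}\qquad(i,j\in\{0,1,2,3\}),\qquad \eta=\mathrm{diag}(1,1,1,-1),
\]
i.e. the four matrices are mutually orthogonal under the trace pairing. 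This splits into two observations already available from the multiplication table: the squares satisfy $\sigma_1^2=\sigma_2^2=\sigma_0$ and $\sigma_3^2=-\sigma_0$, whence $\mathrm{Tr}(\sigma_i^2)=\pm2$; and every mixed product $\sigma_i\sigma_j$ with $i\ne j$ equals $\pm\sigma_k$ by \eqref{eq:3.14} and the anticommutation rules, hence is traceless. Together with $\mathrm{Tr}\,\sigma_0=2$ and $\mathrm{Tr}\,\sigma_i=0$ for $i\ge1$ this establishes the relation for all index pairs, including those involving $0$.

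Given the orthogonality relation, the scalar-product formula is immediate: expanding $\ttilde{\mathbf v}\,\ttilde{\mathbf w}=\tfrac14\sum_{i,j}v^iw^j\,\sigma_i\sigma_j$ and tracing kills the off-diagonal terms, leaving $\mathrm{Tr}\,\ttilde{\mathbf v}\,\ttilde{\mathbf w}=\tfrac12(v^1w^1+v^2w^2-v^3w^3)$, so that $-2\,\mathrm{Tr}\,\ttilde{\mathbf v}\,\ttilde{\mathbf w}=-v^1w^1-v^2w^2+v^3w^3=\mathbf v\cdot\mathbf w$. The coefficient-extraction formula is equally quick: $\mathrm{Tr}\,\ttilde{\mathbf v}\,\sigma_i=\tfrac12\sum_j v^j\,\mathrm{Tr}(\sigma_j\sigma_i)=\eta_{ii}\,v^i$ for $i\in\{1,2,3\}$, and since each such $\sigma_i$ is traceless, Cayley--Hamilton gives $\sigma_i^2=-\det(\sigma_i)\,\sigma_0$, i.e. $\eta_{ii}=-\det\sigma_i$; as $\eta_{ii}=\pm1$, inverting yields $v^i=-\det\sigma_i\cdot\mathrm{Tr}\,\ttilde{\mathbf v}\,\sigma_i$.

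For the norm I would simply compute the determinant from the explicit matrix in \eqref{eq:4.8}: $\det\ttilde{\mathbf v}=\tfrac14\big((-y)(y)-(x+z)(x-z)\big)=\tfrac14(-x^2-y^2+z^2)$, hence $4\det\ttilde{\mathbf v}=Q(\mathbf v)$. As a consistency check this also follows from the scalar-product formula with $\mathbf w=\mathbf v$ together with the traceless Cayley--Hamilton identity $\mathrm{Tr}(M^2)=-2\det M$, which is the very fact behind $\eta_{ii}=-\det\sigma_i$; it gives $\mathbf v\cdot\mathbf v=-2\,\mathrm{Tr}(\ttilde{\mathbf v}^2)=4\det\ttilde{\mathbf v}$.

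I do not expect a genuine obstacle: the statement is representation-theoretic bookkeeping and every ingredient is already on the table. The only point demanding care is sign discipline in the products \eqref{eq:3.14} combined with the anticommutation rules, since a single misplaced sign would corrupt both the signature $\eta$ and the determinant factors. The cleanest safeguard is to verify $\sigma_1\sigma_2=\sigma_3$, $\sigma_2\sigma_3=-\sigma_1$, $\sigma_3\sigma_1=-\sigma_2$ once directly at the matrix level and then propagate signs uniformly through anticommutativity.
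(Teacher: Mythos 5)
Your proposal is correct in every step: the orthogonality relation $\mathrm{Tr}(\sigma_i\sigma_j)=2\eta_{ij}$ with $\eta=\mathrm{diag}(1,1,1,-1)$ does hold for the matrices \eqref{eq:3.12}, the trace computation then yields $-2\,\mathrm{Tr}\,\widetilde{\mathbf v}\,\widetilde{\mathbf w}=-v^1w^1-v^2w^2+v^3w^3$, the determinant computation gives $4\det\widetilde{\mathbf v}=-x^2-y^2+z^2$, and the Cayley--Hamilton identity $\sigma_i^2=-\det(\sigma_i)\,\sigma_0$ for traceless $\sigma_i$ correctly identifies $\eta_{ii}=-\det\sigma_i$, closing the coefficient-extraction formula. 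Note, however, that the paper offers no proof of this proposition at all: it is stated bare, with the verification left implicit as a direct computation from the explicit matrix $\widetilde{\mathbf v}=\tfrac12\left[\begin{smallmatrix}-y & x+z\\ x-z & y\end{smallmatrix}\right]$. So there is nothing to match against; what your route buys over the implied brute-force check is that all three assertions flow from the single trace-orthogonality lemma, and the otherwise mysterious factor $-\det\sigma_i$ in the coefficient formula gets a conceptual explanation (via $\mathrm{Tr}(M^2)=-2\det M$ for traceless $M$) instead of a case-by-case verification. One small point worth flagging: the paper is sign-inconsistent in Section~\ref{s:4}, declaring $Q=-x^2-y^2+z^2$ but $G=\mathrm{diag}(1,1,-1)$; your convention $\mathbf v\cdot\mathbf w=-v^1w^1-v^2w^2+v^3w^3$ is the one forced by the proposition itself (so that $\mathbf v\cdot\mathbf v=\Vert\mathbf v\Vert$), and is the right choice.
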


The technique of Clifford algebras allows one to represent the orthogonal 
group by the corresponding pin group. Since an orthonormal transformation 
may be composed from orthogonal reflections in hyperplanes, one finds a 
realization of the action of the Lorenz group on the Minkowski space of 
triangles via conjugation by matrices of the spin group 
$SL^{\pm\!}(2,\mathbb{R})$; in particular, for any orthogonal matrix $A \in  O(2,1; 
\mathbb{Z})$, the action $\mathbf{v}' = A\mathbf{v}$ corresponds to 
\begin{equation}    \label{eq:4.9}
  \ttilde {\mathbf v}\,'=\ttilde {A}\,\ttilde {\mathbf v}\,\ttilde {A}^\ast 
\end{equation}
that is, the following diagram commutes:
\[
\begin{CD}
\quad \\[-5pt]
\mathbf{v} @>\widetilde{\hbox to 6pt{}}>> \ttilde {\mathbf v} = \frac{1}{2} \sum v^i \sigma_i \\
	@VVAV                                              @VV{\hbox{conj } \ttilde A}V \\
A\mathbf{v} @>\widetilde{\hbox to 6pt{}}>> \ttilde A \ttilde {\mathbf v} \ttilde A^* \\[5pt]
\end{CD}
\]
And now the reward: since the Pythagorean triples lie on the ``light cone'' 
of the Minkowski space, we may construct them from spinors in a manner 
analogous to the standard geometry of spinors for relativity theory. But 
here we reconstruct Euclid's parameterization of the triples. Recall that 
\[
  \left[ {{\begin{array}{*{20}c}
	x  \\
	y  \\
	z  \\
  \end{array} }} \right]\quad =\quad 
  \left[ {{\begin{array}{*{20}c}
	{m^2-n^2}  \\
	{2mn}  \\
	{m^2+n^2}  \\
  \end{array} }} \right]\,.
\]
Thus we have:

\begin{theorem}   
The spin representation of Pythagorean triples splits into a tensor (Kronecker) product: 
\begin{equation}    \label{eq:4.10}
  \ttilde {v} = \frac{1}{2}\left[ {{\begin{array}{*{20}c}
	{-y}   & {x+z}   \\
	{x-z}   & y   \\
  \end{array} }} \right] = \left[ {{\begin{array}{*{20}c}
	{-mn}   & {m^2}   \\
	{-n^2}   & {mn}   \\
  \end{array} }} \right] = \left[ {{\begin{array}{*{20}c}
	m   \\
	n   \\
  \end{array} }} \right]\otimes \left[ {{\begin{array}{*{20}c}
	{-n}   & m   \\
  \end{array} }} \right]\,.
\end{equation}
\end{theorem}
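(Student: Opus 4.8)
The plan is to prove this by direct substitution followed by a rank-one factorization. First I would take the matrix representation $\ttilde{\mathbf v}$ supplied by the map $\sim$ in \eqref{eq:4.8} and substitute Euclid's values $x = m^2-n^2$, $y = 2mn$, $z = m^2+n^2$ into the four entries. Each entry collapses to a degree-two monomial in $m,n$: the combinations $x+z$ and $x-z$ telescope to $2m^2$ and $-2n^2$ respectively, while $y$ contributes $2mn$, and the overall factor $\tfrac12$ cancels throughout. This produces the explicit middle matrix displayed in \eqref{eq:4.10}, namely the matrix with rows $(-mn,\,m^2)$ and $(-n^2,\,mn)$.

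Next I would observe that this matrix is manifestly of rank one, so it admits a factorization as an outer product $u\,w^{\mathsf T}$ of a column vector $u$ and a row vector $w^{\mathsf T}$; under the standard identification of a Kronecker product of a $2\times 1$ block with a $1\times 2$ block, this outer product is exactly the tensor product in the statement. Reading off the entries, I take $u = [m,n]^{\mathsf T}$ and $w^{\mathsf T} = [-n,\,m]$, and verify by one line of multiplication that $u\,w^{\mathsf T}$ reproduces the matrix. The conceptual content is that the row factor $[-n,\,m]$ is not arbitrary: by \eqref{eq:4.5} it is precisely the symplectic conjugate $u^{*}$ of the spinor $u$. Hence the factorization reads $\ttilde{v} = u\otimes u^{*}$, exhibiting the spinor representation of a Pythagorean triple as the tensor product of its Euclid spinor with its own symplectic dual.

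Since every step is an algebraic identity in $m$ and $n$, there is no analytic obstacle; the only substantive point is \emph{why} the rank-one splitting is available at all. The answer is that the triple lies on the light cone, so by the preceding proposition its Minkowski norm vanishes, which forces $\det \ttilde{v} = 0$. One checks directly that $\det \ttilde{v} = (-mn)(mn) - (m^2)(-n^2) = 0$, confirming the degeneracy that permits a rank-one matrix to factor through a single spinor. The step I would treat most carefully is bookkeeping of signs and the ordering convention in the Kronecker product, so that the second factor genuinely lands in the dual space as $u^{*}$ in the sense of \eqref{eq:4.5} rather than as a naive transpose; getting this right is what makes the factorization intrinsic rather than merely formal.
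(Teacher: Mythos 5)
Your proof is correct and takes essentially the same route as the paper: the paper treats the theorem as an immediate consequence of substituting $x=m^2-n^2$, $y=2mn$, $z=m^2+n^2$ into the matrix of \eqref{eq:4.8} and reading off the outer-product factorization $u\otimes u^{*}$ with $u^{*}$ as in \eqref{eq:4.5}, which is exactly your computation. Your additional observations (vanishing determinant, light-cone degeneracy enabling the rank-one splitting) are a sound conceptual gloss but go beyond what the paper's terse verification requires.
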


Hence we obtain yet another aspect of the Euclid's formula, namely a tensor 
version of the \eqref{eq:1.2} and \eqref{eq:4.1}:
\begin{equation}    \label{eq:4.11}
  \widetilde{\varphi (u)} = u\otimes u^\ast \,.
\end{equation}
Note that $[-n, m]$ in \eqref{eq:4.10} is the symplectic conjugation of the spinor $[m, n]^{T}$. Now, due to the above Theorem, the adjoint action splits as 
follows
\begin{equation}    \label{eq:4.12}
  \ttilde {\mathbf v}\,' = \ttilde {A}\,\ttilde {\mathbf v}\,\ttilde {A}^\ast 
	= \ttilde {A}\, \left[ {{\begin{array}{*{20}c}
	m   \\
	n   \\
  \end{array} }} \right]\otimes \left[ {-n\;m} \right]\,\ttilde {A}^\ast 
	=\left( {\ttilde {A}\left[ {{\begin{array}{*{20}c}
	m   \\
	n   \\
  \end{array} }} \right]} \right)\otimes \left( {\ttilde {A}\left[ 
  {{\begin{array}{*{20}c}
	m   \\
	n   \\
  \end{array} }} \right]} \right)^{\ast}\,.
\end{equation}
And the conclusion to the story: The spin representation emerges as ``half" 
of the above representation:
\begin{equation}    \label{eq:4.13}
    \left[ {{\begin{array}{*{20}c}
	m   \\
	n   \\
  \end{array} }} \right] \to  \left[ {{\begin{array}{*{20}c}
	{m'}   \\
	{n'}   \\
  \end{array} }} \right] = \ttilde {A}\,\left[ {{\begin{array}{*{20}c}
	m   \\
	n   \\
  \end{array} }} \right]\,.
\end{equation}

\begin{remark}[on d-duality]
The ``duality'' \eqref{eq:2.4} is also represented in 
spin language, namely, the matrix $D$ of exchange of $x$ with $y$ and the 
corresponding 2-by-2 spin matrix, $\ttilde {D}$, are:
\begin{equation}    \label{eq:4.14}
  D=\left[ {{\begin{array}{*{20}c}
	0 & 1 & 0 \\
	1 & 0 & 0 \\
	0 & 0 & 1 \\
  \end{array} }} \right]\quad ,
	\quad
  \ttilde {D}=\left[ {{\begin{array}{*{20}c}
	1 & 1 \\
	1 & {-1} \\
  \end{array} }} \right]\,.
\end{equation}
Indeed, we have:
\[
  \left[ {{\begin{array}{*{20}c}
	1 & 1 \\
	1 & {-1} \\
  \end{array} }} \right]\,\left[ {{\begin{array}{*{20}c}
	m   \\
	n   \\
  \end{array} }} \right]=\left[ {{\begin{array}{*{20}c}
	{m+n}   \\
	{m-n}   \\
\end{array} }} \right]\,.
\]
The $d$-duality may be expressed in the form of a commuting diagram 
\[
\begin{CD}
\quad \\[5pt]
  \left[ {{\begin{array}{*{20}c}
	m   \\
	n   \\
  \end{array} }} \right] 
	@>\widetilde{\hbox to 6pt{}}>>
  \left[ {{\begin{array}{*{20}c}
	x   \\
	y   \\
	z   \\
  \end{array} }} \right] = 
  \left[ {{\begin{array}{*{20}c}
	{m^2-n^2}   \\
	{2mn}   \\
	{m^2+n^2}   \\
  \end{array} }} \right]				\\[10pt]
	@VV{\ttilde D}V  		@VVDV   \\[10pt]
  \left[ {{\begin{array}{*{20}c}
	{m+n}   \\
	{m-n}   \\
  \end{array} }} \right]
	@>\widetilde{\hbox to 6pt{}}>>
  2\,\left[ {{\begin{array}{*{20}c}
	y   \\
	x   \\
	z   \\
  \end{array} }} \right] = \left[ {{\begin{array}{*{20}c}
	{4mn}   \\
	{2m^2-2n^2}   \\
	{2m^2+2n^2}   \\
  \end{array} }} \right]			\\[5pt]
\end{CD}
\]
\end{remark}

\begin{table}[t]
\small{
\renewcommand{\arraystretch}{1.8}
\begin{tabular}{|l|c|l|}	\hline
\multicolumn{3}{|c|}{\textbf{Magic Correspondence}} \\  \hline
			&Minkowski space $\mathbb{R}^{2,1}$
			&\qquad Traceless $2\times 2$ matrices	\\	\hline\hline
Main object	&$\mathbf{v} = (x,y,z)$
			&$\ttilde v = \sum \mathbf{v}^i \sigma_i = \frac{1}{2}
			 { \left[ \begin{smallmatrix}
			        -y		&x+z \\
				x-z 		&y	\end{smallmatrix} \right] }$ 	\\	
Norm		&$\Vert \mathbf{v} \Vert = -x^2-y^2 + z^2$
			&$\Vert\mathbf{v}\Vert = 4 \det \ttilde {\mathbf v}$ 	\\	
Action		&$\mathbf{v}' = A\mathbf{v}$
			&$\ttilde {\mathbf v}' = \ttilde A \ttilde {\mathbf v} \ttilde A^*$    	\\[-7pt]	
			&  ($A \in O(2,1;\mathbb{Z})$) 
			&  ($\ttilde A \in SL^{\pm\!}(2,\mathbb{Z})$)	\\	
Minkowski scalar product
			&$\mathbf{v \cdot w} = \mathbf{v}^TG\mathbf{w}$
			&$\mathbf{v \cdot w} = -2\,\hbox{Tr}\, \ttilde{\mathbf{v}}
				\ttilde{\mathbf{w}}$			\\	
The $i$th coefficient
			&$v^i = \mathbf{v \cdot e}_i$
			&$v^i = - \det\sigma_i \cdot\hbox{Tr } \ttilde{\mathbf{v}} \sigma_1$ \\  \hline
\end{tabular}
}
\vskip.1in
\caption{Magic correspondence for Pythagorean triples and their spinor description}
\label{t:1}
\end{table}

\section{Hall Matrices and Their Spinor Representation}  \label{s:5}

It is known that all primitive Pythagorean triples can be generated by the 
following three \textit{Hall matrices} \cite{Hal}
\begin{equation}    \label{eq:5.1}
  U=\left[ {{\begin{array}{*{20}c}
 1   & 2   & 2   \\
 2   & 1   & 2   \\
 2   & 2   & 3   \\
\end{array} }} \right] \quad L=\left[ {{\begin{array}{*{20}c}
 1   & {-2}   & 2   \\
 2   & {-1}   & 2   \\
 2   & {-2}   & 3   \\
\end{array} }} \right] \quad R=\left[ {{\begin{array}{*{20}c}
 {-1}   & 2   & 2   \\
 {-2}   & 1   & 2   \\
 {-2}   & 2   & 3   \\
\end{array} }} \right]
\end{equation}
by acting on the initial vector $\mathbf{v} = [3,4,5]^{T}$ (``Egyptian 
vector''). For clarification, a few examples:
\[
\begin{alignedat}{2}
  Lv = \left[ {{\begin{array}{*{20}c}
	1   & {-2}   & 2   \\
	2   & {-1}   & 2   \\
	2   & {-2}   & 3   \\
  \end{array} }} \right]\left[ {{\begin{array}{*{20}c}
	3   \\
	4   \\
	5   \\
  \end{array} }} \right] &= \left[ {{\begin{array}{*{20}c}
	5   \\
	{12}   \\
	{13}   \\
  \end{array} }} \right] 
	&\quad
  R\mathbf{v} &= \left[ {{\begin{array}{*{20}c}
	{-1}   & 2   & 2   \\
	{-2}   & 1   & 2   \\
	{-2}   & 2   & 3   \\
  \end{array} }} \right]\;\left[ {{\begin{array}{*{20}c}
	3   \\
	4   \\
	5   \\
  \end{array} }} \right] = \left[ {{\begin{array}{*{20}c}
	{15}   \\
	8   \\
	{17}   \\
  \end{array} }} \right] 					\\[10pt]
  U\mathbf{v} = \left[ {{\begin{array}{*{20}c}
	1   & 2   & 2   \\
	2   & 1   & 2   \\
	2   & 2   & 3   \\
  \end{array} }} \right]\left[ {{\begin{array}{*{20}c}
  3   \\
  4   \\
  5   \\
  \end{array} }} \right]  &= \left[ {{\begin{array}{*{20}c}
	{21}   \\
	{20}   \\
	{29}   \\
  \end{array} }} \right] 
	&\quad
  &\hbox{URL}^{2}U\mathbf{v} = \left[ {{\begin{array}{*{20}c}
	{3115}   \\
	{3348}   \\
	{4573}   \\
  \end{array} }} \right] \,.
\end{alignedat}
\]
Let us state it formally:

\begin{theorem}[Hall]  
The set of primitive Pythagorean triples is in 
one-to-one correspondence with the algebra of words over alphabet $\{U, L, 
R\}$.
\end{theorem}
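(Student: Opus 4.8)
The plan is to transport the statement from the cumbersome $3\times3$ Hall matrices acting on triples to the $2\times2$ spinor matrices acting on Euclid's parameters, where the arithmetic becomes transparent. Using the tensor splitting of the preceding Theorem together with the induced spinor action \eqref{eq:4.13}, I would first compute the lifts $\tilde U,\tilde L,\tilde R\in SL^{\pm}(2,\mathbb{Z})$ of the Hall matrices. Because $\widetilde{A\mathbf v}=\tilde A\,\tilde{\mathbf v}\,\tilde A^{*}$ in \eqref{eq:4.9} and $\tilde{\mathbf v}=u\otimes u^{*}$ in \eqref{eq:4.11}, each lift $\tilde A$ is determined by how it sends the parameter $u=[m,n]^{T}$; tracking two independent parameters through each Hall matrix fixes
$$\tilde U=\begin{bmatrix}2&1\\1&0\end{bmatrix},\qquad \tilde L=\begin{bmatrix}2&-1\\1&0\end{bmatrix},\qquad \tilde R=\begin{bmatrix}1&2\\0&1\end{bmatrix},$$
i.e.\ $\tilde U(m,n)=(2m+n,m)$, $\tilde L(m,n)=(2m-n,m)$, $\tilde R(m,n)=(m+2n,n)$. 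Since the Egyptian vector $[3,4,5]^{T}$ is the image of the root spinor $u_{0}=[2,1]^{T}$ (the number $z=2+i$), the theorem reduces to the claim that the free monoid on $\{\tilde U,\tilde L,\tilde R\}$ carries $u_{0}$ onto each \emph{admissible} parameter $(m,n)$ --- coprime, of opposite parity, with $m>n\ge1$ --- exactly once.

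Second, I would check that the three generators preserve admissibility: coprimality follows from $\gcd(2m\pm n,m)=\gcd(n,m)$ and $\gcd(m+2n,n)=\gcd(m,n)$; opposite parity survives because each new coordinate sum is $\equiv m+n\pmod 2$; and the inequalities $m'>n'\ge1$ follow directly from $m>n\ge1$. Thus every word yields a genuine primitive triple.

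The crux --- and the step I expect to be the main obstacle --- is a clean trichotomy governing the inverse (descent) moves. Computing the ratio $\rho=m/n$ of the three children shows that $\tilde L$ lands in $1<\rho<2$, $\tilde U$ in $2<\rho<3$, and $\tilde R$ in $\rho>3$, three disjoint open intervals. The subtle point is that no admissible parameter can sit on a boundary except at the root: $\rho=3$ forces $(m,n)=(3,1)$, which violates opposite parity, while $\rho=2$ forces $(m,n)=(2,1)=u_{0}$. Hence every admissible parameter other than $u_{0}$ lies in exactly one interval, i.e.\ is a child under exactly one generator. Pinning down this boundary analysis is what makes the tree ternary and collision-free, and it must be argued with care since it simultaneously carries the existence and the uniqueness of each descent step.

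Finally I would run the descent itself. The forced inverse move is $\tilde U^{-1}(m,n)=(n,m-2n)$, $\tilde L^{-1}(m,n)=(n,2n-m)$, or $\tilde R^{-1}(m,n)=(m-2n,n)$ according to the interval containing $\rho$; in each case a short check gives admissibility of the parent together with a strict decrease of $m+n$ (the parent sum being $m-n$ for $\tilde U,\tilde R$ and $3n-m$ for $\tilde L$, both less than $m+n$ when $m>n\ge1$). As $m+n$ is a positive integer bounded below by the root value $3$, the descent terminates after finitely many steps, necessarily at $u_{0}$. Surjectivity is then immediate, and injectivity follows because the descent is forced at every stage --- the last letter of any word is recovered from the child's $\rho$-interval --- so distinct words cannot meet. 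Reversing the unique descent path assigns to each primitive triple its unique word, establishing the bijection with the free monoid $\{U,L,R\}^{*}$, the empty word corresponding to the Egyptian triple $[3,4,5]^{T}$.
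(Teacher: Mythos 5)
Your proposal cannot be matched against the paper's own argument, because the paper does not actually prove this theorem: immediately after stating it, the author writes that ``the original argument of Hall was algebraic'' and defers entirely to \cite{Hal}, moving on to reinterpret the Hall matrices geometrically. What you have written is therefore an independent, self-contained proof --- and a correct one. It is, in effect, the Barning--Hall tree descent transported to Euclid's parameters, which fits the paper's own philosophy that the spinor side is where the arithmetic becomes linear: your lifts coincide with the matrices of \eqref{eq:5.2} (so you could simply invoke the paper's later theorem for the identity $\varphi(\ttilde{A}u)=A\varphi(u)$, noting that the sign ambiguity of a lift is harmless once you normalize to $m>n\ge 1$), your closure computations ($\gcd$, parity, $m'>n'\ge 1$) are right, and the ratio trichotomy $\rho\in(1,2)\cup(2,3)\cup(3,\infty)$ with the two boundary cases $\rho=2$ (only the root $(2,1)$) and $\rho=3$ (only the inadmissible $(3,1)$) is exactly the point that makes the inverse move forced, so that the strictly decreasing quantity $m+n$ carries both surjectivity and injectivity at once. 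The one fact you use silently is the classical statement that Euclid's map restricted to admissible pairs is a bijection onto primitive triples; the paper records this in Section~\ref{s:1} with references, so a citation there closes the loop. Compared with Hall's original bookkeeping on the $3\times 3$ action, your route buys simpler arithmetic (children $(2m+n,m)$, $(2m-n,m)$, $(m+2n,n)$ are visibly linear) and simultaneously supplies the conceptual payoff the paper only gestures at, namely that the spinor representation is not just a reformulation but an efficient vehicle for the proof itself.
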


The original argument of Hall was algebraic (see \cite{Hal} for a proof). But 
here we shall reinterpret this intriguing result in terms of geometry. Hall 
matrices may be understood in the context of our previous section and 
augmented with the spinor description. Let us start with this:

\begin{proposition}   
The Hall matrices and their products are elements of the Lorentz group $O(2,1; \mathbb{Z})$. 
In particular, for any $\mathbf{v} = [x,y,z]^{T}$, 
\[
g(\mathbf{v},\mathbf{v}) =0 \Rightarrow  g(M\mathbf{v}, M\mathbf{v}) = 0,
\]
and therefore they permute Pythagorean triples.
\end{proposition}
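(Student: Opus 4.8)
The plan is to reduce the whole statement to a single $3\times 3$ verification plus elementary group theory. An integer matrix $M$ lies in $O(2,1;\mathbb{Z})$ exactly when it preserves the quadratic form $Q$ of \eqref{eq:4.2}, equivalently when $M^{T}GM = G$ for the diagonal metric $G = \mathrm{diag}(1,1,-1)$ of Section~\ref{s:4}. The matrices $U$, $L$, $R$ of \eqref{eq:5.1} are visibly integer, so the only analytic content is this quadratic identity. First I would verify $U^{T}GU = G$ by direct multiplication (one short computation, using that $U$ is symmetric).

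The key observation that avoids repeating the work is that $L$ and $R$ are obtained from $U$ by negating a single column:
\[
L = U\,S_{2}, \qquad R = U\,S_{1}, \qquad S_{1} = \mathrm{diag}(-1,1,1),\ \ S_{2} = \mathrm{diag}(1,-1,1).
\]
Each sign matrix $S_{i}$ is itself an integer Lorentz matrix, since a diagonal $\pm 1$ matrix preserves any diagonal form, whence $S_{i}^{T}GS_{i} = G$. Combined with $U^{T}GU = G$ this gives $L^{T}GL = S_{2}^{T}(U^{T}GU)S_{2} = S_{2}^{T}GS_{2} = G$, and likewise for $R$; hence all three generators lie in $O(2,1;\mathbb{Z})$. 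Their products need no separate argument: $O(2,1;\mathbb{Z})$ is closed under multiplication, so every word in $\{U,L,R\}$ is again an integer Lorentz matrix.

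For the displayed implication, preservation of $Q$ gives $g(M\mathbf{v},M\mathbf{v}) = g(\mathbf{v},\mathbf{v})$ for all $\mathbf{v}$, so a null vector maps to a null vector; since $M$ is integer, $M\mathbf{v}$ is again an integer null vector, i.e.\ a Pythagorean triple in the signed sense of Section~\ref{s:4}. To promote ``maps into'' to ``permutes,'' note that $M^{T}GM = G$ forces $(\det M)^{2}=1$, so $\det M = \pm 1$ and $M^{-1}$ is again an integer Lorentz matrix; thus $M$ restricts to a bijection of the set of integer null vectors onto itself.

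Honestly there is no deep obstacle: the Proposition is a verification, and the only places to be careful are the signs in the columns of $L$ and $R$ (handled cleanly by the factorization above) and the logical gap between ``maps into'' and ``permutes'' (closed by $\det M = \pm 1$). One should also resist proving more than is asked --- that the images remain \emph{primitive} is the content of Hall's Theorem and is not needed here. A more conceptual route, in the spirit of Section~\ref{s:4}, would exhibit spinor lifts $\ttilde U, \ttilde L, \ttilde R \in SL^{\pm\!}(2,\mathbb{Z})$ and invoke the commuting diagram \eqref{eq:4.9}: the adjoint action $\ttilde{\mathbf v}\mapsto \ttilde A\,\ttilde{\mathbf v}\,\ttilde A^{\ast}$ preserves $\det\ttilde{\mathbf v}$ and hence the Minkowski norm, which would establish the Lorentz property with no $3\times 3$ arithmetic at all.
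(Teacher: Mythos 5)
Your proof is correct, and its core is the same as the paper's: the paper's proof simply states that one readily checks $X^{T}GX=G$ for $X=U,L,R$ (with $G=\mathrm{diag}(1,1,-1)$) and records the determinants $\det L=\det R=1$, $\det U=-1$. You add two refinements worth noting. First, your factorization $L=US_{2}$, $R=US_{1}$ with $S_{1}=\mathrm{diag}(-1,1,1)$, $S_{2}=\mathrm{diag}(1,-1,1)$ collapses three matrix verifications into one; this is not only valid (negating a column of $U$ does produce $L$ and $R$, and the $S_{i}$ are trivially Lorentz) but actually anticipates the paper's own later remark, where the Hall matrices are decomposed as $U=THT$, $R=THT\,R_{1}$, $L=THT\,R_{2}$ with exactly these sign matrices. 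Second, you explicitly close the gap between ``maps null vectors into null vectors'' and ``permutes'': the paper asserts the permutation claim without comment, whereas you observe that $M^{T}GM=G$ forces $\det M=\pm1$, so $M^{-1}$ is again an integer Lorentz matrix and $M$ is a bijection on integer null vectors. That is a genuine (if small) logical point the paper leaves implicit, and you are also right to flag that primitivity preservation belongs to Hall's theorem, not here. Your suggested alternative via the spinor lifts and \eqref{eq:4.9} would be circular in the paper's logical order --- the lifts $\widetilde U,\widetilde L,\widetilde R$ are only derived in Theorem 5.4, after this proposition --- so it is good that you kept it as a remark rather than the main argument.
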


\begin{proof} Elementary. Recall that the matrix of the pseudo-Euclidean 
metric is G = diag(1,1,-1). One readily checks that Hall matrices preserve 
the quadratic form, i.e., that $X^{T}GX = G$ for $X=U$, $L$, $R$. In particular we have $L,R  \in  SO(2,1)$, as det $L = 1$, det $R = 1$. Since det $U = -1$, $U$ contains a reflection.
\end{proof}

Hall's theorem thus says that the set of primitive Pythagorean triples 
coincides with the orbit through $[3,4,5]^{T}$ of the action of the 
semigroup generated by the Hall matrices, a subset of the Lorenz group of 
the Minkowski space of triangles:
\[
\hbox{gen} \{R,L,U \} \subset  O(2,1; \mathbb{Z})\,.
\]

Thus the results of the previous section apply in particular to the Hall 
semigroup. In particular:

\begin{theorem}   
The spin representation of the Hall matrices are 
\begin{equation}    \label{eq:5.2}
  \ttilde {U}=\left[ {{\begin{array}{*{20}c}
	2   & 1   \\
	1   & 0   \\
  \end{array} }} \right]
	\quad
  \ttilde {L}=\left[ {{\begin{array}{*{20}c}
	2   & {-1}   \\
	1   & {\;0}   \\
  \end{array} }} \right]
	\quad
  \ttilde {R}=\left[ {{\begin{array}{*{20}c}
	1   & 2   \\
	0   & 1   \\
  \end{array} }} \right]\,.
\end{equation}
In particular, they are members of the modular 
group $SL^{\pm\!}(2,\mathbb{Z}) \subset  SL^{\pm\!}(2,\mathbb{R})$, that is they 
preserve the symplectic form up to a sign.
\end{theorem}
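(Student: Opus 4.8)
The plan is to use the defining property of the spin representation, equation \eqref{eq:4.7}: a $2\times 2$ matrix $\ttilde X$ is the spinor representation of a Lorentz matrix $X$ precisely when $X\varphi(u)=\varphi(\ttilde X u)$ for every spinor $u=[m,n]^{T}$. By the tensor factorization of the previous section, equations \eqref{eq:4.10}--\eqref{eq:4.12}, this condition is equivalent to the conjugation relation \eqref{eq:4.9}, namely $\widetilde{X\mathbf v}=\ttilde X\,\ttilde{\mathbf v}\,\ttilde X^{\ast}$, restricted to the null vectors $\mathbf v=\varphi(u)$. Existence of a lift $\ttilde X$ is guaranteed by the general double-cover framework of Section \ref{s:4}, so the task reduces to exhibiting the three matrices \eqref{eq:5.2} explicitly and checking that they satisfy the defining relation.

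To locate the candidate matrices I would invert $\varphi$ on the light cone using \eqref{eq:4.10}: from the matrix form $\ttilde{\mathbf v}=\bigl[\begin{smallmatrix}-mn & m^{2}\\ -n^{2}& mn\end{smallmatrix}\bigr]$ one reads off $m^{2}=(x+z)/2$, $n^{2}=(z-x)/2$, and $mn=y/2$, which recovers the spinor $[m,n]^{T}$ up to an overall sign. Applying $X$ to the images $\varphi(2,1),\varphi(1,1),\dots$ of a few basis spinors and reading back the resulting spinors then determines $\ttilde X$, again up to sign; this procedure produces exactly the matrices $\ttilde U$, $\ttilde L$, $\ttilde R$ of \eqref{eq:5.2}.

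The verification step is then a routine polynomial identity. I would substitute a general $u=[m,n]^{T}$ and check that the two vectors $X\varphi(u)$ and $\varphi(\ttilde X u)$, each homogeneous quadratic in $(m,n)$, coincide component by component for each $X\in\{U,L,R\}$. For instance $\ttilde U u=[2m+n,\,m]^{T}$, and one checks directly that $\varphi(2m+n,m)=U\varphi(m,n)$; the cases $L$ and $R$ are identical in spirit, with $\ttilde L u=[2m-n,\,m]^{T}$ and $\ttilde R u=[m+2n,\,n]^{T}$. Finally, each $\ttilde X$ has integer entries with $\det\ttilde U=-1$ and $\det\ttilde L=\det\ttilde R=1$, so $\ttilde X\in SL^{\pm\!}(2,\mathbb Z)$, and Proposition \ref{prop:4.5} yields that these matrices preserve the symplectic form $\omega$ up to sign.

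The only genuinely delicate point is the sign ambiguity inherent in the $2{:}1$ cover $SL^{\pm\!}(2)\to O(2,1)$: the spinor lift $\ttilde X$ is determined by $X$ only up to $\pm\ttilde X$, since both signs induce the same conjugation and hence the same action on triples. I expect this to be the main thing to handle with care, because to make the correspondence genuinely useful --- in particular so that words in the Hall matrices correspond to products of their spin lifts --- one must fix the signs consistently rather than choosing them independently for $U$, $L$, $R$. A reassuring consistency check is that the covering respects determinants, $\det X=\det\ttilde X$ in each case (note $\det U=-1$, matching the reflection content of $U$ recorded in the preceding proposition); this both singles out the natural sign convention and confirms that no genuine obstruction stands in the way.
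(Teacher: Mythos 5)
Your proposal is correct and follows essentially the same route as the paper: find the candidate lifts by elementary algebra via the defining relation $X\varphi(u)=\varphi(\ttilde X u)$, verify the quadratic identities in $(m,n)$, and then invoke Proposition~\ref{prop:4.5} for membership in $SL^{\pm\!}(2,\mathbb{Z})$ (the paper phrases this last step as checking $\ttilde U^{\ast}\ttilde U=-I$, $\ttilde L^{\ast}\ttilde L=\ttilde R^{\ast}\ttilde R=I$, which is equivalent to your determinant check since $M^{\ast}M=\det(M)\,I$). Your explicit component-by-component verification and the remark on the $\pm$ sign ambiguity simply fill in details the paper leaves as "elementary algebra."
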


\begin{proof} 
The transformations in the space of Euclid's parameters that 
correspond to the Hall matrices may be easily found with elementary algebra. 
By Proposition~\ref{prop:4.5}, we need to check $M^{\ast }M=\pm I$. Simple 
calculations show:
\[
\ttilde {U}^\ast \ttilde {U}=-I
	\qquad
\ttilde {L}\,^\ast \ttilde {L}=I
	\qquad
\ttilde {R}^\ast \ttilde {R}=I\,.
\]
That is, the determinants are $\det\ttilde {U}= -1$, $\det \ttilde {L} = 1$, 
and $\det \ttilde {R} = 1$.
\end{proof}

Clearly, the ``magic correspondence'' outlined in the previous section holds 
as well for the Hall matrices. In particular, the spin version of matrices 
act directly on Pythagorean spinors \eqref{eq:4.9}, and the action on the Pythagorean 
vectors may be obtained by the tensor product \eqref{eq:4.13}. 

A classification of the semigroups in the modular 
group $SL^{\pm\!}(2,\mathbb{Z})$ that generate the set of primitive Pythagorean spinors as 
their orbits seems an interesting question. 

\vskip.1in
\noindent
\textbf{Remark on the structure of the Pythagorean semigroup and its spin 
version.}
The last unresolved question concerns the origin or the geometric 
meaning of the Hall matrices and their spin version. First, one may try to 
interpret Hall matrices in terms of the Minkowski space-time structure. They 
may easily be split into a boost, spatial rotation and reflection. Indeed, 
define in $\mathbb{R}^{2,1}$ these three operators: 
\[
H=\left[ {{\begin{array}{*{20}c}
 3 & 0 & {\sqrt 8 } \\
 0 & 1 & 0 \\
 {\sqrt 8 } & 0 & 3 \\
\end{array} }} \right] \,,
\quad
T=\left[ {{\begin{array}{*{20}c}
 c & s & 0 \\
 {-s} & c & 0 \\
 0 & 0 & 1 \\
\end{array} }} \right] \quad \hbox{ where  } c = s = \surd 2/2
\]
($H = $ boost by ``velocity" ($\surd $3/8,0) , $T =$ rotation by $45^\circ$). 
Then the Hall matrices are the following Lorentz transformations :
\[
\begin{aligned}
U &= T^{2 }(T^{-1}HT) = THT  \\
R &= THT R_{1}  \\
L &= THT R_{2}
\end{aligned}
\]
where $R_{1} = \hbox{diag}(-1,1,1)$ and $R_{2} = \hbox{diag}(1,-1,1)$ represent 
reflections. For instance, $ U$ represents a boost in the special direction (1,1) 
followed by space point-inversion. This path seems to lead to nowhere. Thus 
we may try the spinor version of the Hall matrices. One may easily see that 
the latter can be expressed as linear combinations of our pseudo-Pauli 
basis:
\[
\ttilde {L}\ = \sigma _{0} - \sigma _{2} - \sigma _{3} 
	\qquad
\ttilde {R} =\sigma _{0}+\sigma _{1}+\sigma _{3} 
	\qquad 
\ttilde  {U} =\sigma _{0}+\sigma _{1} - \sigma _{2}\,.
\]
But this na\"{\i}ve association does not seem to explain anything. Quite 
surprisingly, insight may be found in the geometry of disk packing. 
This is the subject of the next section.

\section{Pythagorean Triangles, Apollonian Gasket and Poincar\'{e} Disk}  \label{s:6}

Now we shift our attention to --- at first sight rather exotic for our 
problem --- Apollonian gaskets. 

\vskip.1in
\noindent
\textbf{Apollonian window.} 
Apollonian gasket is the result of the following construction. 
Start with a unit circle, called in the following the \textit{boundary circle}.
Inscribe two circles so that all three are mutually tangent. 
Then inscribe a new circle in every enclosed triangular-shaped region (see Figure 3a). 
Continue \textit{ad infinitum}. 
A special case, when the first two inscribed circles are half the radius of 
the boundary circle will be called the \textit{Apollonian window} (shown in Figure~\ref{f:3}b). 
\begin{figure}[ht]
\[
  \includegraphics[width=3.7in]{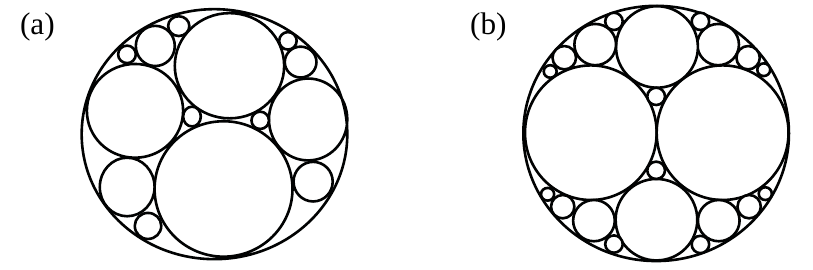}
\]
\caption{An Appollonian gasket and the Apollonian window}
\label{f:3}
\end{figure}
The Apollonian window has amazing geometric properties \cite{Mum}, \cite{Man}. 
One of them is the fact that the curvature of each circle is an integer. 
Also, as demonstrated in \cite{LMW}, the center of each circle has rational coordinates. 
It can be shown that the segment that joins the centers of any two adjacent 
circles forms the hypotenuse of a Pythagorean triangle, whose two legs are 
parallel/perpendicular to the main axes. 
More specifically, the sides of the triangle become integers when divided by the 
product $r_{1}r_{2}$ of the radii of the adjacent circles (see Figure~\ref{f:4}). 
\begin{figure}[h]
\[
  \includegraphics[width=4.4in]{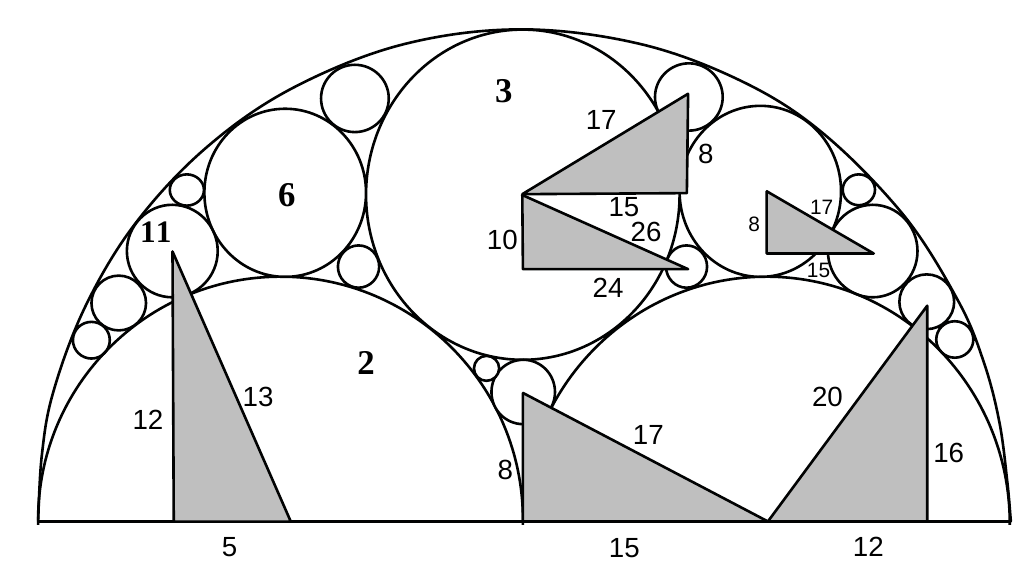}
\]
\caption{Pythagorean triangles in the Apollonian window. Bold numbers represent the curvatures of the corresponding circles.}
\label{f:4}
\end{figure}

Let us call a \textit{subboundary circle} any circle in the Apollonian window tangent to the boundary 
circle (shaded circles in Fig.~\ref{f:5}). Consider a pair of tangent circles of 
which one is the boundary--- and the other a subboundary circle. If we 
prolong the hypotenuses of the associated triangles, we shall hit points on 
the boundary circle, namely the points of tangency. Due to this 
construction, the slope of each such line is rational. We will try to see 
how to permute these points. 

Among the many symmetries of the Apollonian window are inversions in the 
circles that go through the tangency points of any three mutually tangent 
circles. Such inversions permute the disks of the window, and in particular 
preserve their tangencies. We shall look at the following three symmetries 
labeled A, B, C (see Figure~\ref{f:5}b): 
\begin{equation}    \label{eq:6.1}
\begin{aligned}
  A &\hbox{ -- reflection through the vertical axis} \\ 
  B &\hbox{ -- reflection through the horizontal axis} \\
  C &\hbox{ -- inversion through the circle $C$ (only a quarter of the circle is shown)} 
\end{aligned}
\end{equation}

\begin{proposition}   
The three compositions of maps
\begin{equation}    \label{eq:6.2}
    CA, \quad CB, \quad CBA
\end{equation}
leave the set of subboundary circles of the first quadrant invariant. In 
particular, they permute points of tangency on the boundary circle in the 
first quadrant.
\end{proposition}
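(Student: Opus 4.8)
The plan is to reduce the whole statement to the action of the three maps on the points of tangency lying on the boundary circle, and then to carry out a careful bookkeeping of quadrants. First I would fix coordinates so that the boundary circle is the unit circle $x^2+y^2=1$ and the two radius-$\tfrac12$ circles are placed symmetrically about both axes; then $A$ is $(x,y)\mapsto(-x,y)$ and $B$ is $(x,y)\mapsto(x,-y)$. Solving for the circle through the three pairwise tangency points of the mutually tangent triple (boundary, one half-circle, and the adjacent curvature-$3$ circle) identifies $C$ concretely as the inversion in the circle of radius $1$ centred at $(1,1)$, i.e. $(x-1)^2+(y-1)^2=1$. A one-line check $d^2=2=1^2+1^2$ shows this circle is orthogonal to the unit circle, so the inversion $C$ fixes the boundary circle setwise; together with the fact already recorded in the text that such inversions permute the disks of the window and preserve tangencies (and that $A,B$ are manifest symmetries), this gives that each of $A,B,C$ sends subboundary circles to subboundary circles.

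Since every subboundary circle corresponds bijectively to its single point of tangency on the boundary circle, and since $A,B,C$ each preserve the boundary circle and preserve tangencies, the maps act on the boundary circle alone, carrying tangency points to tangency points. This already shows the full set of subboundary circles is permuted by $A,B,C$, hence by $CA,CB,CBA$, and it reduces the first-quadrant claim to tracking where the first-quadrant arc $\{(\cos\theta,\sin\theta):\theta\in[0,\tfrac{\pi}{2}]\}$ is sent. On this circle, $A$ acts by $\theta\mapsto\pi-\theta$ and $B$ by $\theta\mapsto-\theta$, so each of them pushes the first-quadrant arc into the complementary (``long'') arc.

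The heart of the argument, and the step I expect to be the main obstacle, is the quadrant bookkeeping for the inversion $C$. The circle $C$ meets the boundary exactly at $(1,0)$ and $(0,1)$, which it therefore fixes, splitting the boundary into the first-quadrant arc and its complement. Testing one interior point of each arc — the midpoint $(\tfrac{\sqrt2}{2},\tfrac{\sqrt2}{2})$ lies inside $C$ while $(-1,0)$ lies outside — shows that inversion in $C$ interchanges the two arcs. Composing then gives the three claims at once: $CA$ sends the first-quadrant arc into the second-quadrant arc and $C$ returns it to the first quadrant; $CB$ routes it through the fourth quadrant; and $CBA$ routes it through the second and then the third quadrant. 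In each case the image lands back in the first-quadrant arc, so $CA,CB,CBA$ carry first-quadrant subboundary circles to first-quadrant subboundary circles.

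Finally I would pin down the images precisely, since this is what makes the word ``permute'' meaningful and links the picture to the ternary tree of Hall's theorem. Evaluating $C$ at the corners gives $C(-1,0)=(\tfrac35,\tfrac45)$ and $C(0,-1)=(\tfrac45,\tfrac35)$, the tangency point of the $3$–$4$–$5$ circle and its mirror. Hence $CB$, $CBA$, $CA$ map the first-quadrant arc bijectively onto the three sub-arcs cut off by these two points, namely $[0,\arctan\tfrac{3}{4}]$, $[\arctan\tfrac{3}{4},\arctan\tfrac{4}{3}]$ and $[\arctan\tfrac{4}{3},\tfrac{\pi}{2}]$, which together tile the first-quadrant arc. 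This exhibits each of $CA,CB,CBA$ as an injection of the first-quadrant subboundary circles into themselves whose three images partition the arc, yielding both the asserted invariance and the permutation of the tangency points in the first quadrant.
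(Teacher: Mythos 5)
Your proof is correct, and its skeleton is the same as the paper's: a reflection $A$, $B$, or the point-reflection $BA$ pushes a first-quadrant circle into one of the other three quadrants, and the inversion $C$ pulls it back, tangencies being preserved throughout. The difference is one of substance rather than mere detail: the paper's proof simply asserts that following with $C$ ``returns the circle to the first quarter,'' whereas you actually prove this. You identify $C$ explicitly as inversion in $(x-1)^2+(y-1)^2=1$ (the circle through the tangency points $(1,0)$, $(0,1)$, $(\tfrac15,\tfrac25)$ of the boundary, the right half-circle, and the top curvature-$3$ circle), check orthogonality to the boundary via $d^2=2=1^2+1^2$, conclude that $C$ preserves the boundary circle and fixes $(1,0)$ and $(0,1)$, and then show it swaps the first-quadrant arc with its complement; your reduction from circles to their tangency points is legitimate because distinct subboundary circles have disjoint interiors and hence distinct tangency points. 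Moreover, your computation $C(-1,0)=(\tfrac35,\tfrac45)$, $C(0,-1)=(\tfrac45,\tfrac35)$ yields something the paper's one-line proof does not: the images of $CB$, $CBA$, $CA$ are exactly the sub-arcs $[0,\arctan\tfrac34]$, $[\arctan\tfrac34,\arctan\tfrac43]$, $[\arctan\tfrac43,\tfrac\pi2]$, which tile the first-quadrant arc. This makes precise the sense in which the three maps ``permute'' the tangency points (each is an injection, and the three images partition the set), and it anticipates geometrically the ternary-tree structure that the paper only obtains afterwards through the identification of $CA$, $CB$, $CBA$ with the Hall matrices.
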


\begin{proof} A reflection in line $A$ or $B$ or their composition $AB$ 
(reflection through the central point) carries any circle in the first 
quarter to one of the other three quarters. If you follow it with the 
inversion through $C$, the circle will return to the first quarter. Since 
tangency is preserved in these transformations, the proposition holds. 
\end{proof}
\begin{figure}
\[
  \includegraphics[width=4.4in]{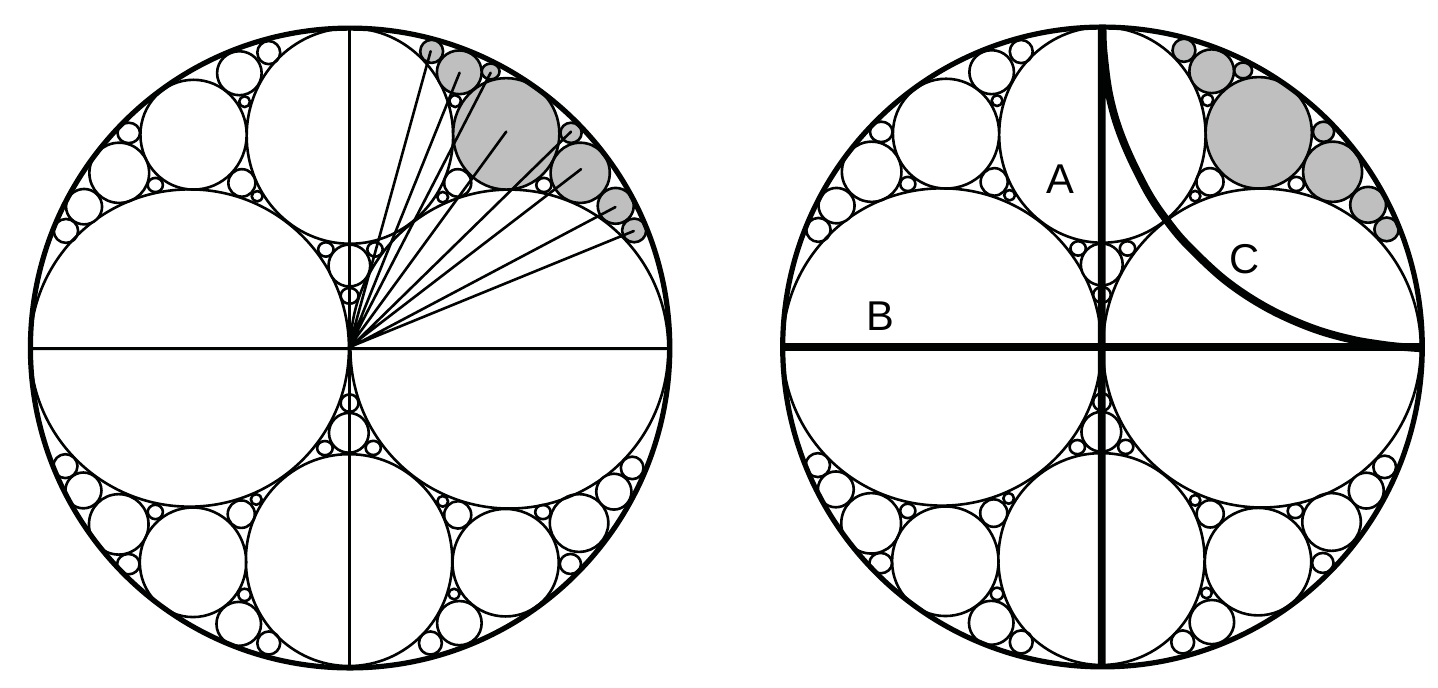}
\]
\caption{(a) Subboundary circles and the corresponding Pythagorean rays.  
(b) Three symmetries of the Apollonian window.}
\label{f:5}
\end{figure}
The crucial observation is that both the lines $A$, $B$ and circle $C$ may be 
understood as ``lines'' in the Poincar\'{e} geometry, if the circle is 
viewed as the Poincar\'{e} disk. This will allow us to represent these 
operations by matrices using the well-known hyperbolic representation of the 
Poincar\'{e} disk.

\vskip.1in
\noindent
\textbf{Poincar\'{e} disk. }Recall the geometry of the hyperbolic 
Poincar\'{e} disk. In the standard model, the set of points is that of a 
unit disk $D$ in the Euclidean plane:
\[
  D = {\{}(x,y) \in  \mathbb{R}^{2 }: x^{2} + y^{2} 
	< 1{\}} \hbox{ with } \partial D = {\{}(x,y) : x^{2} + y^{2} = 1{\}}.
\]
Poincar\'{e} \textit{lines} are the circles that are orthogonal to $\partial D$  
(clearly, only the intersection with $D$ counts). This geometry -- as is well known -- 
may be induced from a hyperbolic linear space. Consider a three-dimensional Minkowski 
space $\mathbb{R}^{2,1}$ and a hyperboloid $H$:
\begin{equation}    \label{eq:6.3}
    t^{2} - x^{2} - y^{2} = 1\,.
\end{equation}
Stereographic projection $\pi $ onto the plane $P\in  \mathbb{R}^{2,1}$ 
defined by $t = 0$, with the vertex of projection at $(-1,0,0)$, brings all 
points of the hyperboloid $H$ onto $D$ in a one-to-one manner. In particular, 
each Poincar\'{e} line in $D$ is an image of the intersection of a plane in 
$\mathbb{R}^{2,1}$ through the origin $\mathbb{O}$ with the hyperboloid $H$, 
projected by $\pi $ onto $D$. We shall use this plane-line correspondence. 
Recall also that reflection in a plane $P$ can be done with the use of a unit 
normal vector $\mathbf{n}$:
\begin{equation}    \label{eq:6.4}
    R_{n}: \mathbf{v}\quad \to\quad   \mathbf{v}' 
	= \mathbf{v} - 2\frac{\langle \mathbf{v, n}\rangle }{\langle \mathbf{n, n}\rangle }    \mathbf{n}
\end{equation}
where the orthogonality $\mathbf{n}\bot P$ and the scalar product are in the 
sense of the pseudo-Euclidean structure of the Minkowski space 
$\mathbb{R}^{2,1}$.

\vskip.1in
\noindent
\textbf{Back to the Apollonian window. }Consider the three symmetries of the 
Apollonian window \eqref{eq:6.2}. Each of them may be realized in terms of a 
reflection in a corresponding plane in the hyperbolic representation. 

\begin{proposition}   
The three symmetries \eqref{eq:6.1} of the Poincar\'{e} 
disc (coinciding with the Apollonian window) have the following matrix 
representations 
\begin{equation}    \label{eq:6.5}
\begin{aligned}
  \hbox{Symmetry }A: \qquad &\mathbf{n}_{1} = [1, 0, 0]^{T} 
	\quad \to  \quad R_{1}
	=\left[   {{\begin{array}{*{20}c}
	{-1}   & 0   & 0   \\
	0   & 1   & 0   \\
	0   & 0   & 1   \\
  \end{array} }} \right] 				\\
  \hbox{Symmetry }B: \qquad &\mathbf{n}_{2} = [0, 1, 0]^{ T} 
	\quad \to  \quad R_{2}
	=\left[ {{\begin{array}{*{20}c}
	1   & 0   & 0   \\
	0   & {-1}   & 0   \\
	0   & 0   & 1   \\
  \end{array} }} \right]				\\
  \hbox{Symmetry }C: \qquad &\mathbf{n}_{3} = [1, 1, 1]^{ T} 
	\quad \to  \quad R_{3}
	=\left[ {{\begin{array}{*{20}c}
	{-1}   & {-2}   & 2   \\
	{-2}   & {-1}   & 2   \\
	{-2}   & {-2}   & 3   \\
  \end{array} }} \right]\,.
\end{aligned}
\end{equation}
\end{proposition}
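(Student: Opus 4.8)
The plan is to read each of the three symmetries as a reflection \eqref{eq:6.4} in a plane through the origin of $\mathbb{R}^{2,1}$ and then convert that reflection into a matrix. Writing the Minkowski product as $\langle \mathbf{v},\mathbf{n}\rangle = \mathbf{n}^{T} G\mathbf{v}$ with $G=\mathrm{diag}(1,1,-1)$, formula \eqref{eq:6.4} becomes the matrix identity $R_{\mathbf n} = I - \tfrac{2}{\langle \mathbf n,\mathbf n\rangle}\,\mathbf n\,\mathbf n^{T} G$, so the whole proposition reduces to identifying, for each of $A$, $B$, $C$, the correct pseudo-Euclidean normal $\mathbf n$ and substituting. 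Note that \eqref{eq:6.4} is insensitive to the overall sign of $G$, so the sign convention chosen for the metric does not affect the outcome; everything downstream is a single $3\times 3$ product.

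For $A$ and $B$ this is immediate. Each is the reflection in a diameter of $D$, and a diameter is the $\pi$-image of the intersection of a coordinate plane with the hyperboloid \eqref{eq:6.3}: the vertical axis comes from the plane $x=0$ and the horizontal axis from $y=0$. Their Minkowski-unit normals are $\mathbf n_{1}=[1,0,0]^{T}$ and $\mathbf n_{2}=[0,1,0]^{T}$ (each with $\langle\mathbf n,\mathbf n\rangle=1$), and substituting into $R_{\mathbf n}$ yields at once $R_{1}=\mathrm{diag}(-1,1,1)$ and $R_{2}=\mathrm{diag}(1,-1,1)$. One sanity check is that on disc coordinates $R_{1}$ sends $(u,v)\mapsto(-u,v)$, i.e. reflection across the vertical axis, as required.

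The substantive step is $C$. First I would pin $C$ down concretely: it is the circle through the three pairwise tangency points of the three mutually tangent circles bounding the first-quadrant curvilinear triangle, namely the boundary circle and the two inscribed circles centered at $(0,\tfrac12)$ and $(\tfrac23,0)$ (the latter located by a short Descartes-curvature computation giving curvature $3$). Those tangency points are $(0,1)$, $(1,0)$, $(\tfrac25,\tfrac15)$, whence $C$ is $(x-1)^{2}+(y-1)^{2}=1$, with center $(1,1)$ and radius $1$. I would then verify the orthogonality relation $a^{2}+b^{2}=r^{2}+1$ (here $2=1+1$), confirming that $C$ is a genuine Poincar\'e line, so that inversion in $C$ is a hyperbolic reflection and hence, in the hyperboloid model, the restriction of a Minkowski reflection \eqref{eq:6.4} in the plane through the origin that $\pi$ carries onto $C$. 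Writing $\pi$ out in coordinates and substituting into a general plane $\alpha x+\beta y+\gamma t=0$ shows that such a plane projects to the circle $u^{2}+v^{2}+\tfrac{2\alpha}{\gamma}u+\tfrac{2\beta}{\gamma}v+1=0$; matching coefficients with $C$ gives $(\alpha,\beta,\gamma)\propto(-1,-1,1)$. Raising the index with $G$ converts this plane-coefficient vector into a multiple of the Minkowski normal $\mathbf n_{3}=[1,1,1]^{T}$, which is again Minkowski-unit ($\langle\mathbf n_{3},\mathbf n_{3}\rangle=1$, spacelike, so the reflection is well defined), and substituting $\mathbf n_{3}$ into $R_{\mathbf n}$ reproduces exactly the matrix $R_{3}$ of \eqref{eq:6.5}.

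The obstacle I anticipate is bookkeeping rather than depth. The one place a sign can silently go wrong is the passage from the Euclidean coefficient vector $(\alpha,\beta,\gamma)$ of the projecting plane to the pseudo-Euclidean normal $\mathbf n$ entering \eqref{eq:6.4}, since the two differ precisely by the action of $G$; it is exactly this twist that turns the ``obvious'' $(-1,-1,1)$ into $[1,1,1]^{T}$. I would therefore treat the $\pi$-image computation and the $G$-raising of the normal with care, and cross-check the resulting $R_{3}$ in two independent ways: by confirming $R_{3}^{T}GR_{3}=G$ (so $R_{3}\in O(2,1)$, consistent with Proposition on the Hall matrices), and by verifying that $R_{3}$ fixes a tangency point of $C$, which any reflection in $C$ must do.
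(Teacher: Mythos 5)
Your proof is correct, and its computational core --- applying the Minkowski reflection \eqref{eq:6.4} to the unit normals $\mathbf{n}_1$, $\mathbf{n}_2$, $\mathbf{n}_3$ --- is the same as the paper's; you merely package it as the single matrix identity $R_{\mathbf n} = I - \tfrac{2}{\langle \mathbf n,\mathbf n\rangle}\,\mathbf n\,\mathbf n^{T}G$ rather than acting on basis vectors column by column. The genuine difference lies in what you prove versus what the paper asserts: the paper's proof disposes of the identification of each normal with its symmetry by saying ``one can easily verify,'' and only displays the mechanical evaluation of $R_3$ from $\mathbf{n}_3$. You supply precisely the omitted verification for the nontrivial case $C$: locating the circle concretely (center $(1,1)$, radius $1$, through the tangency points $(0,1)$, $(1,0)$, $(\tfrac25,\tfrac15)$), checking orthogonality to $\partial D$ so that inversion in $C$ is a legitimate hyperbolic reflection, pulling $C$ back through the stereographic projection to the plane with coefficient vector $(-1,-1,1)$, and raising the index with $G$ to obtain $\mathbf{n}_3 = [1,1,1]^T$ --- which is indeed the only place a sign can silently go wrong, as you note. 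Your cross-checks are sound: $R_3^{T}GR_3 = G$, and $R_3$ fixes the light-like representatives $(1,0,1)$ and $(0,1,1)$ of the tangency points. One immaterial point: your orientation of the window (half-radius circles stacked vertically) may differ from the paper's figure, but since the resulting circle $C$ is symmetric under $x\leftrightarrow y$, the normal and the matrix come out the same either way. In short, your argument buys a complete, self-contained justification of \eqref{eq:6.5}, at the cost of the projection bookkeeping the paper chose to skip.
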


\begin{proof} One can easily verify that each $\mathbf{n}_{i}$ is unit and 
corresponds to the assigned symmetry. Here are direct calculations for 
finding the matrix corresponding to the third symmetry $C$ corresponding to
$\mathbf n = \mathbf n_3$.    Acting on the basis 
vectors and using \eqref{eq:6.4} we get:
\[
\begin{aligned}
  R_{3} \mathbf{e}_{1} 
	= \mathbf{e}_{1}    - 2 \frac{\langle \mathbf{e_1, n}\rangle }{\langle \mathbf{n, n}\rangle }  
        &=\left[ {{\begin{array}{*{20}c}
		1   \\	0   \\	0   \\
	\end{array} }} \right]    -   2\cdot    {\small\frac{-1}{-1}}
          \left[ {{\begin{array}{*{20}c}
		1   \\	1   \\	1   \\
	\end{array} }} \right]=\left[ {{\begin{array}{*{20}c}
		{-1}   \\	{-2}   \\	{-2}   \\
	\end{array} }} \right]					\\ \\
  R_{3} \mathbf{e}_{2} 
	= \mathbf{e}_{2}   - 2 \frac{\langle \mathbf{e_2, n}\rangle }{\langle \mathbf{n, n}\rangle } 
	&=\left[ {{\begin{array}{*{20}c}
		0   \\	 1   \\	 0   \\
	\end{array} }} \right] - 2\cdot    {\small\frac{-1}{-1}}
          \left[ {{\begin{array}{*{20}c}
		1   \\	 1   \\	 1   \\
	\end{array} }} \right]=\left[ {{\begin{array}{*{20}c}
		{-2}   \\	 {-1}   \\	 {-2}   \\
	\end{array} }} \right] 					\\ \\
  R_{3} \mathbf{e}_{3} 
	= \mathbf{e}_{3}    - 2 \frac{\langle \mathbf{e_1, n}\rangle }{\langle \mathbf{n, n}\rangle } 
	&=\left[ {{\begin{array}{*{20}c}
	0   \\	 0   \\	 1   \\
	\end{array} }} \right] - 2\cdot    {\small\frac{1}{-1}}
          \left[ {{\begin{array}{*{20}c}
	1   \\	 1   \\	 1   \\
	\end{array} }} \right]=\left[ {{\begin{array}{*{20}c}
	2   \\	 2   \\	 3   \\
\end{array} }} \right] 
\end{aligned}
\]
which indeed defines matrix $R_{3}$. The other two, $R_{1}$ and $R_{2}$ are 
self-explanatory.
\end{proof}

And now we have our surprising result: 

\begin{theorem}   
The hyperbolic representation of the permutations \eqref{eq:6.2} of the subboundary 
circles of the first quadrant correspond to the Hall matrices: 
\begin{equation}    \label{eq:6.6}
\begin{aligned}
  R_{3} R_{1} &=\left[ {{\begin{array}{*{20}c}
	1   & {-2}   & 2   \\
	2   & {-1}   & 2   \\
	2   & {-2}   & 3   \\
  \end{array} }} \right] = L		\\
  R_{3} R_{2} &=\left[ {{\begin{array}{*{20}c}
	{-1}   & 2   & 2   \\
	{-2}   & 1   & 2   \\
	{-2}   & 2   & 3   \\
  \end{array} }} \right] = R		\\
  R_{3}R_{1}R_{2} &=  \  \left[ {{\begin{array}{*{20}c}
	1   & 2   & 2   \\
	2   & 1   & 2   \\
	2   & 2   & 3   \\
  \end{array} }} \right] \ = U\,.
\end{aligned}
\end{equation}
\end{theorem}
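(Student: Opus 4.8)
The plan is to prove the identities \eqref{eq:6.6} by direct computation, using the matrix representations already in hand for the three generating symmetries. By the preceding Proposition, the symmetry $A$ (reflection in the vertical axis) is represented by $R_{1}$, the symmetry $B$ (reflection in the horizontal axis) by $R_{2}$, and the inversion $C$ in the circle $C$ by $R_{3}$, all listed in \eqref{eq:6.5}. The only conceptual input needed beyond these formulas is that the passage from a symmetry of the Apollonian window to its matrix respects composition: an inversive symmetry of the window, viewed as an isometry of the Poincar\'{e} disk, lifts to a Lorentz transformation of the hyperboloid model \eqref{eq:6.3}, and the composite of two such symmetries lifts to the product of their matrices in the same order. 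Adopting the convention that the rightmost factor acts first, the composite $CA$ (do $A$, then $C$) becomes $R_{3}R_{1}$, the composite $CB$ becomes $R_{3}R_{2}$, and $CBA$ becomes $R_{3}R_{2}R_{1}$.

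First I would dispose of the ordering subtlety in the third composite. Since $R_{1}=\hbox{diag}(-1,1,1)$ and $R_{2}=\hbox{diag}(1,-1,1)$ are commuting diagonal sign matrices, one has $R_{3}R_{2}R_{1}=R_{3}R_{1}R_{2}$, so the product may be written exactly as in the statement \eqref{eq:6.6} without ambiguity. This is the step that reconciles the geometric order $CBA$ with the algebraic order displayed.

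Next comes the actual verification, which is short precisely because $R_{1}$ and $R_{2}$ act by elementary column operations. Right-multiplication of $R_{3}$ by $R_{1}$ merely reverses the sign of the first column of $R_{3}$; right-multiplication by $R_{2}$ reverses the sign of the second column; and $R_{3}R_{1}R_{2}$ reverses the signs of both the first and the second columns at once. Carrying out these three sign flips on
\[
  R_{3}=\left[\begin{array}{ccc} -1 & -2 & 2 \\ -2 & -1 & 2 \\ -2 & -2 & 3 \end{array}\right]
\]
produces exactly the matrices $L$, $R$, and $U$ of \eqref{eq:5.1}, which is the assertion of the theorem.

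The main obstacle is not the arithmetic, which is immediate, but the justification of the correspondence that the preceding Proposition supplies: namely that the inversion $C$ is faithfully encoded by the reflection \eqref{eq:6.4} in the spacelike plane with pseudo-unit normal $\mathbf{n}_{3}=[1,1,1]^{T}$, for which $\langle \mathbf{n}_{3},\mathbf{n}_{3}\rangle=-1\neq 0$, so that the reflection is genuinely an element of $O(2,1)$. Granting this, together with the homomorphism property of the hyperboloid model, the three geometric permutations of subboundary circles from \eqref{eq:6.2} are truly represented by the matrix products above, and the computation then identifies them with the Hall matrices $L$, $R$, $U$.
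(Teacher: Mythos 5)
Your proposal is correct and follows essentially the same route as the paper: the paper states this theorem as an immediate consequence of the preceding Proposition, the identities being verified by direct multiplication of the matrices $R_{1}$, $R_{2}$, $R_{3}$ from \eqref{eq:6.5}, which is exactly your computation (the column sign-flip observation and the commutativity of $R_{1}$ and $R_{2}$ are nice explicit touches that the paper leaves implicit).
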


\begin{corollary} Each primitive Pythagorean triangle is represented in 
the Apollonian window. 
\end{corollary}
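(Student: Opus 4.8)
The plan is to read Hall's theorem geometrically through the final Theorem. The bridge is the observation, already foreshadowed in this section, that a subboundary circle is pinned down by its point of tangency with the boundary circle, a point lying on $\partial D$. In the hyperboloid model this ideal point is a null direction in $\mathbb{R}^{2,1}$, and the integer null vector lying on that ray is exactly a primitive Pythagorean triple; the Egyptian circle corresponds in this way to the base vector $[3,4,5]^T$. Thus I would first make precise the dictionary ``subboundary circle $\mapsto$ tangency point on $\partial D$ $\mapsto$ primitive null vector'', checking that it agrees with the centre-and-radius construction of the triangle used earlier in the section.

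With this dictionary in hand, the equivariance needed is almost automatic. The window symmetries $A$, $B$, $C$ are hyperbolic isometries, hence elements $R_1, R_2, R_3$ of $O(2,1;\mathbb{Z})$ acting \emph{linearly} on all of $\mathbb{R}^{2,1}$. Their action on the ideal boundary $\partial D$ is simply the restriction of this linear action to the light cone, which is precisely the triangle representation. Consequently, permuting tangency points by a window symmetry is the same as multiplying the associated null vector by the corresponding matrix. By the final Theorem the compositions $R_3R_1$, $R_3R_2$, $R_3R_1R_2$ realizing $CA$, $CB$, $CBA$ are exactly the Hall matrices $L$, $R$, $U$.

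It then remains to invoke Hall's theorem: every primitive Pythagorean triple is the image of $[3,4,5]^T$ under some word in $U$, $L$, $R$. By the equivariance just described, this word acts on the Egyptian subboundary circle as the corresponding composition of window symmetries, and by the earlier Proposition such compositions carry subboundary circles of the first quadrant to subboundary circles of the first quadrant. Hence the prescribed triple is the triple of some subboundary circle, i.e., it is represented in the Apollonian window, which is the claim.

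The step I expect to be the main obstacle is pinning down the dictionary rigorously --- verifying that the integer null vector attached to a tangency point is literally the primitive triple produced by the earlier centre-and-radius prescription (sides normalized by $r_1 r_2$), and that the base point is $[3,4,5]^T$ rather than a rescaling. Equivalently, one must confirm that the light-cone (triangle) model and the hyperboloid (Poincar\'e) model are the \emph{same} linear representation of $O(2,1;\mathbb{Z})$ on the shared space $\mathbb{R}^{2,1}$, so that the orbit of circles and the orbit of triples coincide rather than merely matching at the base point. Once this identification and the first-quadrant bookkeeping are secured, the corollary follows at once.
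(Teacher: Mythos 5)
Your proposal is correct and follows exactly the route the paper intends: the corollary is read off from the final Theorem (the compositions $R_3R_1$, $R_3R_2$, $R_3R_1R_2$ are the Hall matrices $L$, $R$, $U$), Hall's theorem applied to the base vector $[3,4,5]^T$, the Proposition that $CA$, $CB$, $CBA$ preserve the first-quadrant subboundary circles, and the circle--triple dictionary sketched earlier in the section. The paper leaves these steps implicit in stating the corollary; you have merely spelled them out, correctly flagging the circle-to-null-vector identification as the part requiring the most care.
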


This finally answers our question of Section~\ref{s:5} on the origin and structure 
of the matrices of spin representation of the Hall matrices. Following the 
above definitions, we get in Clifford algebra the following representations 
of the reflections that constitute the symmetries \eqref{eq:6.2}:
\begin{equation}    \label{eq:6.7}
\begin{aligned}
  &(\mathbf{f}_{1} + \mathbf{f}_{2} + \mathbf{f}_{3} ) \mathbf{f}_{1} 
	= \mathbf{f}_{1}^{2} + \mathbf{f}_{2}\mathbf{f}_{1} + \mathbf{f}_{3} 
	\mathbf{f}_{1} = -\sigma _{3}+\sigma _{0} - \sigma _{2} = 
	\left[ {{\begin{array}{*{20}c}
		2   & {-1}   \\
		1   & {\;0}   \\
	\end{array} }} \right]=\ttilde {L}			\\
  &(\mathbf{f}_{1} + \mathbf{f}_{2} + \mathbf{f}_{3} ) \mathbf{f}_{2} 
	= \mathbf{f}_{1}\mathbf{f}_{2} + \mathbf{f}_{2}^{2} + \mathbf{f}_{3} 
	\mathbf{f}_{2}=\sigma _{0}+\sigma _{3}+\sigma _{1} = 
	\left[ {{\begin{array}{*{20}c}
		1   & 2   \\
		0   & 1   \\
	\end{array} }} \right]=\ttilde {R}			\\
  &(\mathbf{f}_{1} + \mathbf{f}_{2} + \mathbf{f}_{3} ) (-\mathbf{f}_{3} ) 
	= -\mathbf{f}_{1}\mathbf{f}_{3} - \mathbf{f}_{2}\mathbf{f}_{3} - \mathbf{f}_{3}^{2} 
	= -\sigma _{2}+\sigma _{1}+\sigma _{0}=\left[ 
	{{\begin{array}{*{20}c}
		2   & 1   \\
		1   & 0   \\
	\end{array} }} \right]=\ttilde {U}
\end{aligned}
\end{equation}
(cf.\ \eqref{eq:5.2}). Note that the duality map \eqref{eq:4.13} that exchanges $x$ and $y$ in the 
Pythagorean triangles and which has a spinor representation $D = \sigma _{1} - \sigma _{2}$ fits the picture, 
too, as the vector $\mathbf{n} = [1,-1,\, 0]$ represents (scaled) reflection in the plane containing the $x$-$y$ diagonal.

We conclude our excursion into the spin structure of the Euclid parameterization of 
Pythagorean triples with Figure~\ref{f:6} that shows spinors corresponding to some 
Pythagorean triangles in the Apollonian window.

\begin{figure}
\[
  \includegraphics[width=4.2in]{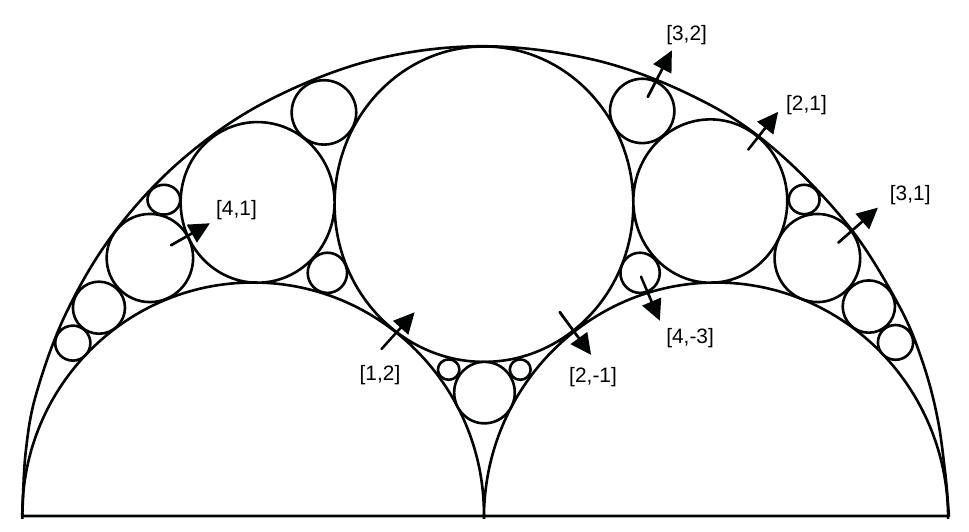}
\]
\caption{Spinors for some Pythagorean triangles in the Apollonian window, 
given as labels at the points of tangency of the corresponding pairs of circles.}
\label{f:6}
\end{figure}

\section{Conclusions and Remarks}  \label{s:7}

We have seen that the space of Euclid's parameters for Pythagorean triples 
is endowed with a natural symplectic structure and should be viewed as a 
spinor space for the Clifford algebra $\mathbb{R}_{2,1 }$, built over 
3-dimensional Minkowski space, whose integer light-like vectors represent 
the Pythagorean triples (see Figure~\ref{f:7}). The minimal algebra for 
$\mathbb{R}^{2,1}$ is four-dimensional and may be conceptualized as 
``pseudo-quaternions'' $\mathbb{K}$ and represented by $2\times 2$ matrices. 
In this context the Pythagorean triples may be represented as traceless 
matrices, and Euclid's parameterization map as a tensor product of spinors. 
This set-up allows us to build the spinor version of the Hall matrices. The 
Hall matrices acquire a geometric interpretation in a rather exotic context 
of the geometry of the Apollonian window. 

\begin{figure}
\[
  \includegraphics[width=5.5in]{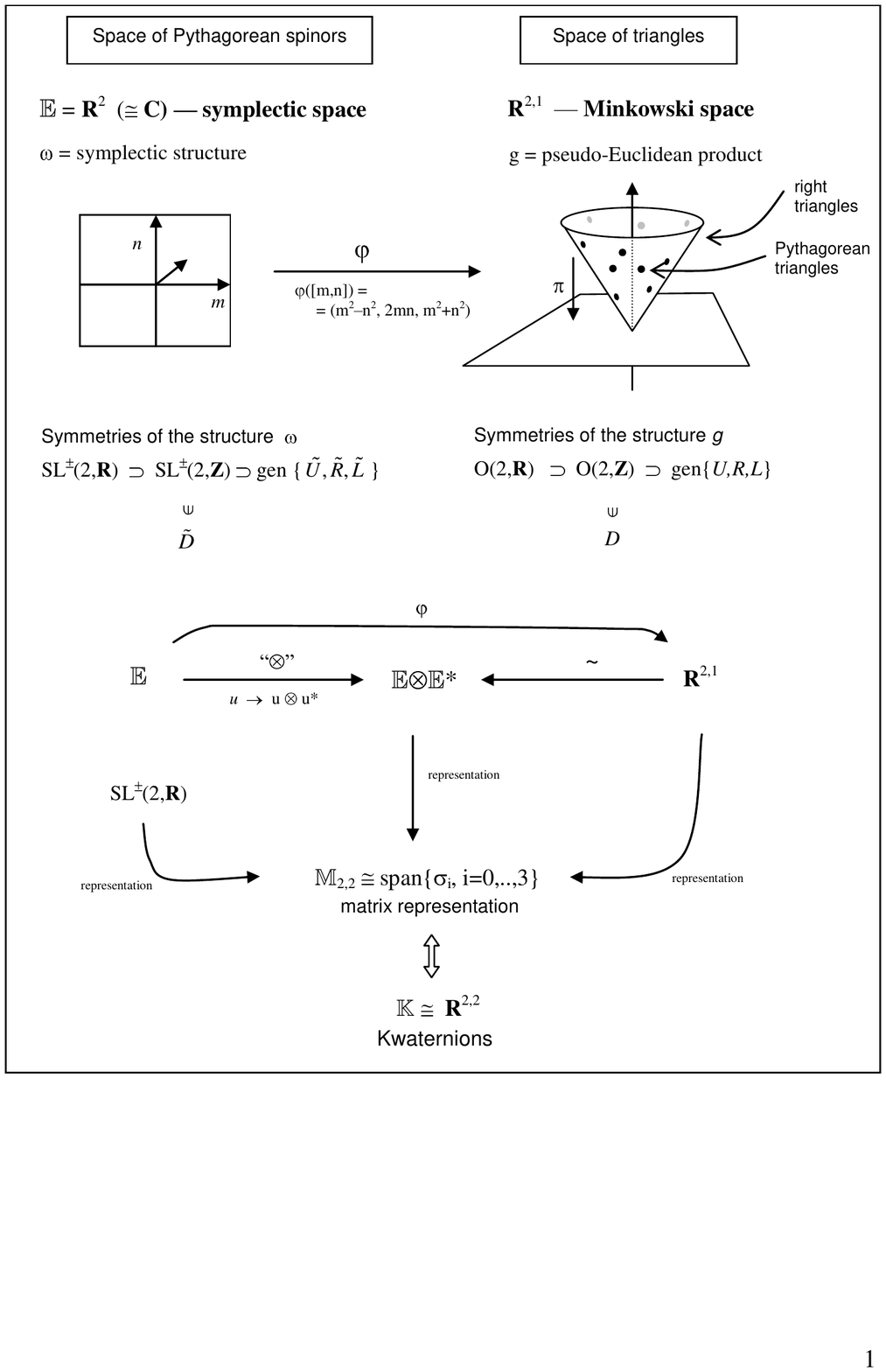}
\]
\caption{Objects related to Euclid's parametrization of Pythagorean triples.}
\label{f:7}
\end{figure}

Euclid's discovery of the parameterization of Pythagorean triples may be 
viewed then as the first recorded use of a spinor space. The spinor 
structure of the Apollonian window is another interesting subject that will 
be studied further elsewhere.

The method may be generalized to other dimensions, as indicated here: 

\vskip.1in
\noindent
\textbf{Method A.} Define a Pythagorean $(k,l)$-tuple as a system of $(k+l)$ 
integers that satisfy
\begin{equation}    \label{eq:7.1}
  a_{1}^{2}+a_{2}^{2} + a_{3}^{2} + {\ldots} + a_{k}^{2} = 
b_{1}^{2} + b_{2}^{2} + b_{3}^{2} + {\ldots} + b_{l}^{2}
\end{equation}
In order to obtain a parameterization of Pythagorean $(k,l)$-tuples do the 
following:
%
Start with pseudo-Euclidean space $\mathbb{R}^{k,l}$, build a representation 
of the Clifford algebra $\mathbb{R}_{k,l}$ . Then split the matrix that 
represents the isotropic vectors of $\mathbb{R}^{k,l}$  
into a tensor product of spinors. This tensor square provides the 
parameterization when restricted to the integer spinors. 

\begin{example} 
Consider Pythagorean quadruples, that is quadruples of 
integers $(a, b, c, d)$
\[
a^{2}+b^{2}+c^{2}=d^{2}\,.
\]
Among the examples are (1,2,2,3), (1,4,8,9), (6,6,7,11), etc. A well-known 
formula that produces Pythagorean quadruples is \cite{Mor}: 
\begin{equation}    \label{eq:7.2}
\begin{aligned}
  a &= 2\textit{mp}			\\
  b &= 2\textit{np}				\\
  c &=p^{2} - (m^{2 }+n^{2})		\\
  d &=p^{2}+m^{2}+n^{2}\,.
\end{aligned}
\end{equation}
It is also known that not all quadruples are generated this way, for 
instance (3, 36, 8, 37) is excluded \cite{Wei}. 
\end{example}

Let us try our method. The spinor representation of the Clifford algebra 
$\mathbb{R}_{3,1}$ is well known. The light-like vectors are represented by 
the Hermitian $2\times 2$ matrices that split into a spinor product,
\begin{equation}    \label{eq:7.3}
  M =\left[ {{\begin{array}{*{20}c}
	{d+a}   & {b+ci}   \\
	{b-ci}   & {d-a}   \\
  \end{array} }} \right] = 2 \left[ {{\begin{array}{*{20}c}
	z   \\
	w   \\
  \end{array} }} \right]\otimes \left[ {{\begin{array}{*{20}c}
	{\bar {z}}   & {\bar {w}}   \\
  \end{array} }} \right],
\end{equation}
where the matrix $M$ on the left side is well-known in physics with $d$ standing 
for time and $a, b, c$ for spatial variables. Note that indeed 
$\hbox{det}\,M=d^{2}-a^{2}-b^{2}-c^{2}$ represents the quadratic form of Minkowski space. 
The factor of 2 is chosen to keep things integer. 

Let $z=m + ni$ and $w=p + qi$. Then \eqref{eq:7.3} becomes: 
\[
  \left[ {{\begin{array}{*{20}c}
	{d+a}   & {b+ci}   \\
	{b-ci}   & {d-a}   \\
  \end{array} }} \right] = 2
  \left[ {{\begin{array}{*{20}c}
	{m^2+n^2}   & {(mp+nq)+(np-mq)i}   \\
	{(mp+nq)-(np-mq)i}   & {p^2+q^2}   \\
  \end{array} }} \right]
\]
which may be readily resolved for $a,b,c,d$. Thus we have proven:

\begin{theorem}   
The following formulae produce all Pythagorean quadruples:
\begin{equation}    \label{eq:7.4}
  \begin{aligned}
  a &=m^{2}+n^{2} - p^{2} - q^{2}		\\
  b &= 2(\textit{mp} + \textit{nq})	\\
  c &= 2(\textit{np} - \textit{mq})	\\
  d &=m^{2}+n^{2}+p^{2}+q^{2}
\end{aligned}
\end{equation}
that is
\[
(m^{2} + n^{2} - p^{2} - q^{2 })^{2} + (2mp + 2nq)^{2} + (2nn 
- 2mq)^{2} = (m^{2} + n^{2} + p^{2} + q^{2 })^{2}\,.
\]
\end{theorem}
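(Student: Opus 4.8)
The plan is to prove the two halves of the statement separately: that every integer choice of $(m,n,p,q)$ yields a quadruple (soundness), and that every Pythagorean quadruple arises this way (completeness). The first half is immediate from the spinor factorization already set up in \eqref{eq:7.3}: writing $z=m+ni$ and $w=p+qi$, the right-hand matrix is the rank-one product $2\,\xi\xi^{*}$ with $\xi=[z,w]^{T}$, so its determinant vanishes identically. Reading off the entries gives exactly the formulae \eqref{eq:7.4}, and since $\det M=d^{2}-a^{2}-b^{2}-c^{2}$, the vanishing determinant is precisely $a^{2}+b^{2}+c^{2}=d^{2}$. Thus the algebraic identity requires no computation beyond expanding $z\bar w=(mp+nq)+(np-mq)i$ together with $|z|^{2}=m^{2}+n^{2}$ and $|w|^{2}=p^{2}+q^{2}$.

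The real content is completeness, which I would phrase as a surjectivity statement for the spinor map onto integer null vectors. Given a quadruple with $d>0$, form the Hermitian matrix $M$ of \eqref{eq:7.3}. It has integer diagonal, Gaussian-integer off-diagonal, trace $2d>0$, and $\det M=0$, hence is positive semidefinite of rank one; over $\mathbb{C}$ it factors as $2\xi\xi^{*}$. The task is to realize $\xi$ with entries in $\mathbb{Z}[i]$. After normalizing parity --- reducing to a primitive quadruple and relabelling so that $a$ is the unique odd coordinate among $a,b,c$ (forced by $d$ odd and $d^{2}\equiv a^{2}+b^{2}+c^{2}\pmod 4$) --- the numbers $N_{1}=(d+a)/2$ and $N_{2}=(d-a)/2$ are nonnegative integers and $\beta=(b+ci)/2$ is a genuine Gaussian integer with $N(\beta)=N_{1}N_{2}$. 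The whole problem then collapses to finding a divisor $z\mid\beta$ in $\mathbb{Z}[i]$ of norm $N(z)=N_{1}$: one then sets $w=\overline{\beta/z}$, so that $N(w)=N_{2}$ and $z\bar w=\beta$, and the $[m,n,p,q]$ recovered from $z,w$ reproduces $(a,b,c,d)$ through \eqref{eq:7.4}.

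The main obstacle --- and the only place number theory enters --- is that such a divisor exists iff $N_{1}$ is a sum of two squares, i.e.\ every prime $p\equiv 3\pmod 4$ divides $N_{1}$ to an even power. I expect this to be the crux, and I would establish it from primitivity. If $p\equiv3\pmod4$ divides $d+a$, then $p\mid (d+a)(d-a)=b^{2}+c^{2}$, and the classical lemma on primes $\equiv3\pmod4$ forces $p\mid b$ and $p\mid c$. Were $p$ also to divide $d-a$, then $p$ would divide $2d$ and $2a$, hence (as $p$ is odd) all of $a,b,c,d$, contradicting primitivity; therefore $p\nmid(d-a)$, so $p^{2}\mid(d+a)$, and iterating on $b^{2}+c^{2}=(d+a)(d-a)$ shows the exponent of $p$ in $N_{1}$ is even. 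With $N_{1}$ (and symmetrically $N_{2}$) a sum of two squares, unique factorization in $\mathbb{Z}[i]$ lets me assemble $z$ of norm $N_{1}$ dividing $\beta$ --- the inert primes land entirely in $z$ by the argument just given, while split and ramified primes are apportioned up to the required exponent --- completing the factorization. The remaining caveat is that this proves completeness for primitive quadruples; a general quadruple is an integer multiple of a primitive one, and examples such as $3\cdot(1,2,2,3)=(3,6,6,9)$ show that imprimitive multiples need not lie in the literal image of \eqref{eq:7.4}, so the surjectivity is best read up to this scaling.
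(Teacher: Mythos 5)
Your proposal is correct, and it is genuinely stronger than the paper's own proof, which consists only of what you call the soundness half: the paper substitutes $z=m+ni$, $w=p+qi$ into the splitting \eqref{eq:7.3}, reads off the entries to obtain \eqref{eq:7.4}, and declares the theorem proven ("which may be readily resolved for $a,b,c,d$"), the surjectivity claim resting implicitly on the assertion that light-like vectors "split into a spinor product" --- true over $\mathbb{C}$, but not automatic over $\mathbb{Z}[i]$, which is exactly where the content of the word "all" lies. Your completeness argument supplies precisely this missing step: the parity normalization (primitivity forces $d$ odd and exactly one odd coordinate among $a,b,c$), the observation that for a prime $p\equiv 3\pmod 4$ primitivity forces $p\nmid(d-a)$ and hence an even exponent of $p$ in $N_1=(d+a)/2$, and the assembly of a divisor $z\mid\beta=(b+ci)/2$ of norm $N_1$ via unique factorization in $\mathbb{Z}[i]$, with $w=\overline{\beta/z}$. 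This is a classical argument (essentially Lebesgue's three-square identity theorem), and it buys two things the paper's derivation does not: an actual proof that primitive quadruples are reached, and the correct delimitation of the theorem's scope --- your example $(3,6,6,9)$ shows the literal statement "all Pythagorean quadruples" is false, since $(9+3)/2=6$ is not a sum of two squares, so surjectivity holds only for primitive quadruples up to relabeling (and their square multiples), a caveat the paper never confronts even though it carefully notes the analogous failure of the older formula \eqref{eq:7.2} on $(3,36,8,37)$. What the paper's approach buys in exchange is brevity and the conceptual point it is actually after: that the formula drops out of the Clifford-algebra/spinor formalism with no computation; your proof is the one to cite if the theorem is to be read as a true biconditional.
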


The quadruple (3, 36, 8, 37) that was not covered by \eqref{eq:7.2} may be now obtained 
by choosing $(m,n,p,q) = (4,2,4,1)$ 
\[
(4^{2} + 2^{2} - 4^{2} - 1^{2 })^{2} + (2\cdot 4\cdot 4 + 
2\cdot 2\cdot 1)^{2} + (2\cdot 2\cdot 4 - 2\cdot 4\cdot 
1)^{2} = (4^{2} + 2^{2} + 4^{2} + 1^{2 })^{2}\,.
\]

The standard Euclid's parameterization of Pythagorean triples results by choosing 
$n=q=0$ (which imposes $c = 0$), or by $m=n$ and $p =q$ , although this time with a redundant doubling 
in the formulae. Also, choosing only $q = 0$ will result in the system of formulae \eqref{eq:7.2}. 
\\

Quite similarly, we can treat Pythagorean hexads using the fact that 
$\mathbb{R}_{5,1}\cong \mathbb{H}(2)$ (quaternionic $2\times 2$ matrices).

\begin{theorem}   
Let us use collective notation  $m=(m_0, m_1, m_2, m_3)$  and  $n=(n_0, n_1, n_2, n_3)$. 
Also denote in ususal way $mn = m_0n_0 + m_1 n_1 + m_2n_2 + m_3 n_3$.
The following formulae produce Pythagorean hexads:
\begin{equation}    \label{eq:7.5}
  \begin{aligned}
  a_0 &= m^{2}+n^{2} 		\\
  a_1 &= 2(n_0 m_1 - n_1m_0 +  m_3n_2 - m_2 n_3) \\
  a_2 &= 2(n_0 m_2 - n_2m_0 +  m_1n_3 - m_3 n_1) \\
  a_3 &= 2(n_0 m_3 - n_3m_0 +  m_2n_1 - m_1 n_2) \\
  a_4 &=  2 mn  \\
  a_5 &=  m^{2} - n^{2}
\end{aligned}
\end{equation}
that is for any $m, n\in \mathbb{Z}^4$ we have 
\[
    a_0^2 = a_1^2 + a_2^2 + a_3^2 + a_4^2 +a_5^2\,.
\]
\end{theorem}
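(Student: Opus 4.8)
The plan is to follow Method A using the quaternionic spinor factorization dictated by $\mathbb{R}_{5,1}\cong \mathbb{H}(2)$. First I would reinterpret $m=(m_0,m_1,m_2,m_3)$ and $n=(n_0,n_1,n_2,n_3)$ not as real $4$-vectors but as quaternions $m=m_0+m_1\,i+m_2\,j+m_3\,k$ and $n=n_0+n_1\,i+n_2\,j+n_3\,k$, so that the bilinear pairing $mn$ of the statement is exactly the real part of $m\bar n$, and $m^2=|m|^2=m\bar m$ is the quaternionic norm squared. The central object is the rank-one Hermitian quaternionic matrix
\[
  M \;=\; 2\begin{bmatrix} m \\ n \end{bmatrix}\otimes\begin{bmatrix} \bar m & \bar n \end{bmatrix}
      \;=\; 2\begin{bmatrix} |m|^2 & m\bar n \\ \overline{m\bar n} & |n|^2 \end{bmatrix},
\]
the direct analogue of \eqref{eq:7.3}, whose six independent real entries encode a light-like vector of $\mathbb{R}^{5,1}$.

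The key computation is to expand the off-diagonal quaternion $2m\bar n$ in the basis $\{1,i,j,k\}$ and match its four real components to $(a_4,a_1,a_2,a_3)$. A direct multiplication shows that the real (scalar) part of $2m\bar n$ is $2mn=a_4$, while its vector part is precisely $a_1\,i+a_2\,j+a_3\,k$: the cross-product contributions $m_3n_2-m_2n_3$, etc., arising from $ij=k$, $jk=i$, $ki=j$ reproduce exactly the antisymmetric expressions in \eqref{eq:7.5}. Reading off the diagonal, $2|m|^2=a_0+a_5$ and $2|n|^2=a_0-a_5$, completes the identification of $M$ with the coordinate matrix
\[
  M=\begin{bmatrix} a_0+a_5 & a_4 + a_1\,i + a_2\,j + a_3\,k \\ a_4 - a_1\,i - a_2\,j - a_3\,k & a_0-a_5 \end{bmatrix}.
\]

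The identity then falls out of the multiplicativity of the quaternionic norm. Since $|m\bar n|^2=|m|^2|n|^2$, I would compute
\[
  a_1^2+a_2^2+a_3^2+a_4^2 \;=\; |2m\bar n|^2 \;=\; 4|m|^2|n|^2 \;=\; (|m|^2+|n|^2)^2-(|m|^2-|n|^2)^2 \;=\; a_0^2-a_5^2,
\]
which is precisely the asserted hexad relation $a_0^2=a_1^2+a_2^2+a_3^2+a_4^2+a_5^2$. Equivalently, one notes that $M$ has rank one and hence vanishing quaternionic determinant $|m|^2|n|^2-|m\bar n|^2=0$, and that this determinant is exactly the Minkowski quadratic form of $\mathbb{R}^{5,1}$ read off the matrix. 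The only step requiring genuine care is the component bookkeeping in the expansion of $m\bar n$, where the sign conventions of quaternion multiplication must be tracked so that the vector part lands on $(a_1,a_2,a_3)$ with the exact antisymmetric form displayed in the theorem; everything downstream is forced by norm multiplicativity and is routine.
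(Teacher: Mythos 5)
Your proposal is correct and takes essentially the same approach as the paper: both realize the hexad as the entries of the rank-one Hermitian quaternionic $2\times 2$ matrix of \eqref{eq:7.6}, split as a spinor tensor square, with the identity following from that matrix being light-like. The only difference is explicitness---you carry out the component bookkeeping for $2m\bar n$ (which indeed lands exactly on the $a_1,a_2,a_3,a_4$ of \eqref{eq:7.5}) and invoke quaternionic norm multiplicativity $|m\bar n|^2=|m|^2|n|^2$, steps the paper compresses into ``resolving this equation gives \eqref{eq:7.5}.''
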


\begin{proof}
A general Hermitian quaternionic matrix can be split into the Kronecker product
\begin{equation}    \label{eq:7.6}
  \left[ {{\begin{array}{*{20}c}
	a_0+a_5   & a   \\
	\bar {a}  & a_0-a_5   \\
  \end{array} }} \right] = 2 \left[ {{\begin{array}{*{20}c}
	p   \\
	q   \\
  \end{array} }} \right]\otimes \left[ {{\begin{array}{*{20}c}
	{\bar {p}}   & {\bar {q}}   \\
  \end{array} }} \right],
\end{equation}
where on the left side  $a_0, a_5 \in \mathbb{R}$, and  $a=a_1+a_2i+a_2j+a_3k \in \mathbb{H}$, 
and on the right side $q = m_0 + m_1i + m_2j + m_3k$  and  $p = n_0 + n_1i + n_2j + n_3k$.  
Resolving this equation gives (\ref{eq:7.5}).    
\end{proof}

For example $(m,n) = ((1,2,2,1),(2,1,1,1))$  produces  $2^2 + 3^2 + 4^2 + 8^2 + 14^2 = 17^2$.
Note that in this parameterization map, $\varphi:\mathbb{Z}^8\to\mathbb{Z}^6$,  the dimension 
of the parameter space outgrows that of the space of Pythagorean tuples.
\\

Reconsidering the entries of matrices of equation (\ref{eq:7.6}) gives 
us this alternative generalization:
\begin{theorem}    \label{th:big}
Consider equation
\begin{equation}    \label{eq:7.7}
  \left[ {{\begin{array}{*{20}c}
	a+b   & c   \\
	\bar {c}  & a-b   \\
  \end{array} }} \right] = 2 \left[ {{\begin{array}{*{20}c}
	p   \\
	q   \\
  \end{array} }} \right]\otimes \left[ {{\begin{array}{*{20}c}
	{\bar {p}}   & {\bar {q}}   \\
  \end{array} }} \right],
\end{equation}
with $a, b \in \mathbb{R}$, and  $c,p,q \in \mathbb{A}$, 
where $\mathbb{A}$ is an algebra with a not necessarily positive definite norm and cojugation
satisfying $aa^*=|a|^2$ and $(ab)^*=b^*a^*$. If the quadratic form 
of $\mathbb{A}$ is of signature $(r,s)$, 
then formula (\ref{eq:7.5}) produces Pythagorean $(r+1, s+1)$-tuples.    
\end{theorem}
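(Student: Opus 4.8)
The plan is to reduce the whole statement to a single algebraic fact: the $2\times2$ matrix written as the outer product on the right of \eqref{eq:7.7} has vanishing ``Hermitian determinant,'' exactly as in the triple case \eqref{eq:4.11}, and the signature of $\mathbb{A}$ then dictates how that null condition splits into a Pythagorean identity. Writing $*$ and $\bar{\ }$ for the same conjugation, I first unpack \eqref{eq:7.7} entrywise. Since the right-hand side equals $2\begin{bmatrix} p\bar p & p\bar q \\ q\bar p & q\bar q \end{bmatrix}$, comparison with the left-hand side gives
\[
a+b = 2p\bar p = 2|p|^2,\qquad a-b = 2q\bar q = 2|q|^2,\qquad c = 2p\bar q .
\]
In particular $a,b\in\mathbb{R}$ as required, and $\bar c = 2q\bar p$ follows from $(xy)^* = y^*x^*$ together with $\bar{\bar q}=q$.

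The crux is to verify $(a+b)(a-b) = c\bar c$, that is $a^2-b^2 = |c|^2$. Computing,
\[
c\bar c = 4(p\bar q)(q\bar p) = 4\,p(\bar q q)\bar p = 4|q|^2\,p\bar p = 4|q|^2|p|^2 = (a+b)(a-b),
\]
where $\bar q q = |q|^2$ is real and central, so it pulls through, and the middle regrouping is the only delicate step. For associative $\mathbb{A}$ (the cases $\mathbb{C}$ and $\mathbb{H}$, which recover the quadruple and hexad theorems) it is automatic. For the octonionic case $\mathbb{A}=\mathbb{O}$ needed for decuples associativity fails, so I would invoke Artin's theorem: in an alternative algebra the subalgebra generated by any two elements is associative; as $\bar p,\bar q$ lie in the subalgebras generated by $p,q$, every product above takes place inside the associative subalgebra generated by $p$ and $q$, and the regrouping is legitimate.

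It then remains to read $a^2 - b^2 = |c|^2$ as a Pythagorean tuple. Fix an orthogonal basis of $\mathbb{A}$ diagonalizing the norm and write $c = \sum_i c_i e_i$, so that $|c|^2 = \sum_i \varepsilon_i c_i^2$ with exactly $r$ of the $\varepsilon_i$ equal to $+1$ and $s$ equal to $-1$. Substituting and collecting the negative-signature coordinates on the $a$-side yields
\[
b^2 + \sum_{\varepsilon_i=+1} c_i^2 \;=\; a^2 + \sum_{\varepsilon_i=-1} c_i^2 ,
\]
which is precisely a Pythagorean $(r+1,s+1)$-tuple: $b$ together with the $r$ positive-norm coordinates on one side, $a$ together with the $s$ negative-norm coordinates on the other. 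Resolving \eqref{eq:7.7} for the individual real coordinates of $a,b,c$ in terms of the components of $p,q$ produces the explicit generating formulae, of which \eqref{eq:7.5} is the quaternionic instance ($\mathbb{A}=\mathbb{H}$, signature $(4,0)$, hexads); the choices $\mathbb{A}=\mathbb{C}$ and $\mathbb{A}=\mathbb{O}$ likewise return the quadruples of \eqref{eq:7.4} and the decuples.

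I expect the real obstacle to be the bookkeeping of the second paragraph: establishing the single identity $c\bar c = (a+b)(a-b)$ uniformly from the bare hypotheses $aa^*=|a|^2$ and $(ab)^*=b^*a^*$ across all the intended algebras. For the division algebras $\mathbb{C}$ and $\mathbb{H}$ this is routine, but the octonionic case forces the appeal to alternativity in place of associativity, and one must check that the two-generator subalgebra argument covers every product that occurs, including those hidden in $\bar q q = |q|^2$.
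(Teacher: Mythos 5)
Your proposal is correct, and its skeleton --- unpacking the outer product entrywise, reducing everything to the single identity $(a+b)(a-b)=c\bar c$, then splitting $|c|^2$ by signature into the two sides of \eqref{eq:7.1} --- is exactly the argument the paper intends: the paper gives no separate proof of this theorem at all, relying on the same ``resolve the matrix equation'' computation it sketched for the quaternionic hexad case. What you add in your second paragraph, however, is genuinely missing from the paper and is in fact necessary. The regrouping $(p\bar q)(q\bar p)=p(\bar q q)\bar p=|p|^2|q|^2$ does \emph{not} follow from the stated hypotheses $aa^*=|a|^2$ and $(ab)^*=b^*a^*$ alone: the sedenions satisfy both, yet possess zero divisors, and choosing $p,q\neq 0$ with $p\bar q=0$ gives $c=0$ while $a^2-b^2=4|p|^2|q|^2\neq 0$, so the claimed Pythagorean identity fails there. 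Hence some form of alternativity (equivalently, the composition property $|xy|^2=|x|^2|y|^2$) must be added to the hypotheses, and your appeal to Artin's theorem --- noting that $\bar p$ and $\bar q$ lie in the unital subalgebra generated by $p$ and $q$ --- is precisely what legitimizes the octonionic (decuple) case the paper advertises, while ordinary associativity covers $\mathbb{C}$, $\mathbb{H}$, and the split/duplex examples. Two small points: your computation also quietly uses $\bar{\bar a}=a$ and $|\bar a|^2=|a|^2$ (needed to get $\bar q q=|q|^2$ from the one-sided hypothesis $aa^*=|a|^2$), which hold in all intended examples but should be stated; and integrality of the resulting tuple for integer coordinates of $p,q$ deserves one sentence, since the structure constants are $\pm 1$. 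With those provisos your write-up is not just a proof but a repair of the theorem as printed.
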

Note that as a special case we may use the Cliford algebras themselves as algebra $\mathbb{A}$.
As an example consider ``duplex numbers" $\mathbb{D}$, \cite{Koc}, which form Clifford algebra 
of ${\mathbb{R}^1}$.  Let $c=c_0+c_1I\in\mathbb{D}$, where $c_0,c_1\in\mathbb{R}$,
and $I^2=1$ (pseudo-imaginary unit).  Similarly, set $p=p_0+p_1I$ and $q=q_0+q_1I$.
Then (\ref{eq:7.7}) produces Pythagorean (2,2)-tuples:
\[
            (p_0^2-p_1^2+q_0^2-q_1^2)^2 
           +(2p_0q_1-2p_1q_0)^2 =
            (p_0^2-p_1^2-q_0^2+q_1^2)^2 
           +(2p_0q_0-2p_1q_1)^2 
\]
A simple application that goes beyond Clifford algebras: 
using the algebra of octonions, $\mathbb{A}=\mathbb{O}$,
results in a parameterization
$\varphi:\mathbb{Z}^{16}\to\mathbb{Z}^{10}$ of Pythagorean ``decuples" by 16 parameters.

\begin{remark}
Since Clifford algebras are $2^n$-dimensional, using them in Theorem
\ref{th:big} will lead to parameteriztion of (generalized)
Pythagorean ($2^{n}+2$)-tuples by   $2^{n+1}$ ``Euclid's parameters".
For $n=0,1,2,\ldots$ we get 3-, 4-, 6-, 10-, ..., -tuples.  
Incidentally, these numbers occur frequently in various string theories,
the reason for which is not fully understood \cite{Sch}.
\end{remark}

\section*{Acknowledgements}

The author thanks Philip Feinsilver for his 
encouragement and helpful remarks. 
He owes also a debt of gratitude to Carl Riehm;
a number of inconsistencies could be fixed thanks to his careful reading of the paper.

\end{document}